\documentclass[11pt,reqno]{article}
\usepackage{amsmath,amsfonts,amsthm,color,amssymb, mathrsfs}

 \usepackage{pxfonts}
\usepackage{comment}
\makeatletter
\@tfor\@tempa:=
\alpha \beta \gamma \delta \epsilon \zeta \eta \theta \iota \kappa \lambda \mu
\nu \xi \pi \rho \sigma \tau \upsilon \phi \chi \psi \omega \varepsilon
\vartheta \varpi \varrho \varsigma \varphi  
\do {
      \count255 \numexpr\@tempa+"7000\relax
      \expandafter\mathchardef\@tempa \count255 }
\makeatother

\usepackage{xcolor}
\usepackage[T1]{fontenc}
\usepackage{wasysym}
\usepackage[normalem]{ulem}
\usepackage{stmaryrd} 

\usepackage[left=1in, right=1in, top=1.1in,bottom=1.1in]{geometry}
\usepackage{enumitem}
\usepackage{hyperref}
\hypersetup{
	colorlinks   = true,
	citecolor    = blue,
	linkcolor=blue
}
\allowdisplaybreaks
\usepackage{csquotes}
\usepackage{graphicx}
\usepackage{pstricks}
\usepackage{lmodern}

\newtheorem{lemma}{Lemma}
\newtheorem{theorem}{Theorem}[section]

\newtheorem{proposition}{Proposition}

\def\rme{{\rme}}

\def\blue{\color{blue}}
\def\red{\color{red}}

\def\la{\left\langle}
\def\ra{\right\rangle}

\def\R{\mathbb R}
\def\E{\mathbb E}

\def\blue{\color{blue}}
\def\red{\color{red}}
\def\bR{\mathbb R}

\def\cF{\mathcal F}
\def\cH{\mathcal H}
\def\cF{\mathcal F}

\def\cX{\mathcal X}

\def\bbt{{\boldsymbol{t}}}

\def\bbs{{\boldsymbol{s}}}

\def\la{\langle}
\def\ra{\rangle}

\newcommand{\bxi}{\boldsymbol\xi}
\newcounter{bean}
\newcommand{\benuma}{\setlength{\labelwidth}{.25in}
	
	\begin{list}
		{(\alph{bean})}{\usecounter{bean}}}
	\newcommand{\eenuma}{\end{list}}

\begin{document}
\title{Central limit theorems for stochastic heat equation driven by Gaussian noise with heat-kernel covariance}
\author{Meng Wang\thanks{Center for Applied Mathematics, Tianjin University, Tianjin, China. wmeng\_2206@tju.edu.cn}\and 
Wangjun Yuan\thanks{Department of Mathematics, Southern University of Science and Technology, Shenzhen, China. ywangjun@connect.hku.hk.}}
\date{}
\maketitle
\begin{abstract}
In this article, we study the spatial fluctuations of the Skorohod solution to a
stochastic heat equation on $\R^d$ for $d<4$, driven by multiplicative Gaussian noise
with the non-separable covariance kernel $p_{|t-s|}(x-y)$. We prove that the
solution is strictly stationary and spatially ergodic at every fixed time.
For the centered spatial integral
$F_R(t)=\int_{\{|x|<R\}}(u(t,x)-1)dx,$
we show that
$\mathbb E[F_R(t)F_R(s)]\sim K(t,s)R^d$ as $R\to\infty$. Using moment
estimates for the first two Malliavin derivatives and a second-order Gaussian
Poincar\'e inequality, we establish a quantitative central limit theorem in
total variation distance with rate $R^{-d/2}$. We also prove a functional
central limit theorem for the process $\{R^{-d/2}F_R(t)\}_{t\geq0}$.

\vspace{0.2cm}
\textit{Keywords}: Stochastic heat equation;  spatial ergodicity;
 Malliavin calculus; quantitative central limit theorem;
functional central limit theorem
\end{abstract}

\section{Introduction}

For $d<4$, we consider the following stochastic heat equation
\begin{equation}\label{eq-SPDE}
\left\{
\begin{aligned}
    \frac{\partial}{\partial t}u(t,x)
    &=\frac 12
    \Delta u(t,x)+u(t,x)\dot W(t,x),
    \qquad t\geq0,\ x\in\R^d,\\
    u(0,x)&=1,
\end{aligned}
\right.
\end{equation}
where $\dot W$ is a generalized space--time Gaussian field with covariance
kernel
\begin{align}\label{eq-covariance}
    \E\big[\dot W(t,x)\dot W(s,y)\big]
    =
    p_{|t-s|}(x-y),
\end{align}
where $p_t(x)= (2\pi t)^{-\frac{d}{2}} \exp (-\frac{|x|^2}{2t})$  for $t>0$ and $x\in\R^d$ is the heat kernel, and the product in $u\dot W$ is a Wick product. Equation \eqref{eq-SPDE} corresponds to the case $\alpha=2$ in \cite{23swy}, where the existence and uniqueness of the square-integrable Skorohod solution were established for $d<4$.

For $R>0$, let $B_R=\{x\in\R^d:|x|\le R\}$ and denote
\begin{equation}\label{eq:spatial-average}
    F_R(t)=\int_{B_R}\bigl(u(t,x)-1\bigr)dx.
\end{equation}

In this paper, we study the spatial ergodicity of the solution to
\eqref{eq-SPDE} and the asymptotic behavior of the centered spatial
integrals $F_R(t)$ as $R\to\infty$.
A distinctive feature of the noise in \eqref{eq-SPDE} is that its covariance
is not separable in time and space. More precisely, the heat kernel
$p_{|t-s|}(x-y)$ cannot be written in the product form
$\gamma_0(t-s)\gamma_1(x-y)$, which is commonly assumed in the literature on
temporally correlated Gaussian noise.

We first recall some related results for stochastic heat equations driven by
Gaussian noises with separable covariance structures. For the one-dimensional
equation driven by space--time white noise, Huang,
Nualart, and Viitasaari \cite{HNV20} proved that
$\operatorname{Var}(F_R(t))$ is of order $R$ and established a quantitative
central limit theorem in total variation distance with rate $R^{-1/2}$,
together with a functional central limit theorem. These results were extended in \cite{HNVZ20} to the $d$-dimensional
stochastic heat equation driven by Gaussian noise with covariance
$\delta_0(t-s)|x-y|^{-\beta}$, $0<\beta<d\wedge2$.  In this case,
$\operatorname{Var}(F_R(t))$ is of order $R^{2d-\beta}$ and the rate in total
variation distance is $R^{-\beta/2}$. More generally, Chen, Khoshnevisan, Nualart, and Pu
\cite{CKNP2021} studied stochastic heat equations driven by Gaussian noise with
covariance
$\delta_0(t-s)f(x-y),$ 
where $f$ is a spatially homogeneous covariance satisfying Dalang's condition.
They established the spatial stationarity and ergodicity of the solution under
suitable conditions on $f$ by means of Poincar\'e-type inequalities.
Subsequently, in \cite{CKNP2023}, they obtained quantitative and
functional central limit theorems for spatial averages of functions of the
solution by using the Malliavin--Stein method; also see \cite{CDNP22ParabolicCLT,CDNP22Delta}.

For temporally correlated Gaussian noises, most existing works considered
separable covariance structures of the form
$\E\big[\dot W(t,x)\dot W(s,y)\big]
=
\gamma_0(t-s)\gamma_1(x-y).$
Under this framework, Nualart and Zheng
\cite{Nualart2020Averaging} established qualitative and functional central
limit theorems for integrable and Riesz-type spatial correlations, while
Nualart, Song, and Zheng \cite{NSZ21} treated the one-dimensional rough
spatial case. These results were subsequently strengthened by Nualart, Xia,
and Zheng \cite{NXZ22}, who obtained quantitative central limit theorems in
these settings. Related
quantitative and functional central limit theorems for the parabolic Anderson
model driven by time-independent Gaussian noise were obtained by Balan and
Yuan \cite{BY23}. Related quantitative and functional central limit theorems for spatial
averages of stochastic wave equations can be found in
\cite{NualartZheng20WaveErgodicity,NualartZheng22WaveCLT,bnz22,
BalanYuan22Wave,BalanYuan24WaveRough,BHWXY23,24bz,BalanZheng26}.


Besides the above developments on spatial fluctuations of SPDEs, another
motivation for studying \eqref{eq-SPDE} comes from directed polymer models in
correlated random environments. Shen, Song, Sun, and Xu
\cite{ShenSongSunXu21} considered a directed polymer in a space--time
correlated environment generated by a Poisson field of independent random
walks and proved that, under a suitable scaling, the corresponding partition
function converges weakly to the Stratonovich solution of \eqref{eq-SPDE}.
Song, Wang, and Yuan \cite{23swy} established the existence and uniqueness of
the corresponding Stratonovich and Skorohod solutions and derived their
Feynman--Kac representations. Motivated by these works, we investigate the
spatial Gaussian fluctuations of the Skorohod solution of \eqref{eq-SPDE}. 


The preceding results for Gaussian noises with separable covariances
do not apply directly to the heat-kernel covariance considered here.
To handle this covariance, we combine the Wiener chaos expansion
with Schur's test to establish moment estimates for the first
and second Malliavin derivatives of the solution for all $d<4$.
We now state the main result. 


Let $\omega_d$ be the
Lebesgue measure of the unit ball in $\R^d$.  Set
\begin{equation}
\sigma_R^2(t)=\text{Var}(F_R(t)),\,\,\,\,\rho_{t,s}(x-y)=\E\big[(u(t,x)-1)(u(s,y)-1)\big].
\end{equation}
As we show below, the covariance $\rho_{t,s}(x-y)$ depends only
on the difference $x-y$.
\begin{theorem}\label{thm:main}Let $d<4$. Then the following statements hold.
\begin{enumerate}[label=\textup{(\roman*)}]
\item For every $t>0$, the random field
$\{u(t,x):x\in\R^d\}$ is strictly stationary and ergodic.
\item For any $t>0$ and $s>0$, 
\begin{align*}
\E\big[F_R(t)F_R(s)\big]\sim K(t,s)R^d\, \text{as}\,\,R\to\infty,\,\,\text{where} \,\,K(t,s):=\omega_d\int_{\R^d}\rho_{t,s}(z)dz.
\end{align*}
\item For any $t>0$,
\begin{align*}
d_{TV}\left(\frac{F_R(t)}{\sigma_R(t)},Z\right)\leq C_tR^{-\frac{d}{2}},
\end{align*}
where $Z\sim \mathcal N(0,1)$ and $C_t$  is a finite constant
independent of $R$.
\item For every $R>0$, the process
$\{R^{-d/2}F_R(t)\}_{t\ge0}$ has a continuous modification.  Moreover, as
$R\to\infty$,
\begin{align*}
\{R^{-d/2}F_R(t)\}_{t\geq0}
\overset{d}{\longrightarrow}
\{Q(t)\}_{t\geq0}
\end{align*}
in $C[0,\infty)$, where $\{Q(t)\}_{t\geq0}$ is a centered Gaussian process
with covariance
$\E\big[Q(t)Q(s)\big]=K(t,s)$.
\end{enumerate}
\end{theorem}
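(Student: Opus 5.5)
The whole argument runs through the Wiener chaos expansion of the Skorohod solution, $u(t,x)=1+\sum_{n\ge1}I_n(f_n(\cdot;t,x))$, with kernels
\[
f_n(s_1,y_1,\dots,s_n,y_n;t,x)=\frac{1}{n!}\,p_{t-s_{\sigma(n)}}(x-y_{\sigma(n)})\cdots p_{s_{\sigma(2)}-s_{\sigma(1)}}(y_{\sigma(2)}-y_{\sigma(1)}),
\]
where $\sigma$ orders the times, as in \cite{23swy}. The inner product of the Hilbert space $\mathcal H$ is given by the kernel $p_{|t-s|}(x-y)$.

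\textbf{Stationarity, ergodicity and the covariance asymptotics.} For (i), strict stationarity comes from the shift invariance of the noise law: the shifted noise $W(\cdot,\cdot+z)$ has the same law, and the kernels satisfy $f_n(\cdot;t,x+z)=f_n(\cdot-z;t,x)$. Hence $u(t,\cdot+z)$ solves the same equation driven by the shifted noise, and uniqueness gives equality in law. For ergodicity I would use the Poincaré-type criterion of \cite{CKNP2021}. It suffices to show that
\[
\mathrm{Var}\Big(R^{-d}\int_{B_R}g(u(t,x))\,dx\Big)\to0
\]
for Lipschitz $g$, or for products $\prod_j g_j(u(t,x+z_j))$. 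Via the Clark--Ocone-type or Poincaré bound $\mathrm{Var}(F)\le \mathbb E\|DF\|^2_{\mathcal H}$, this reduces to
\[
R^{-2d}\int_{B_R^2}\int\!\!\int \|D_{r,y}u(t,x)\|_{2}\,\|D_{r',y'}u(t,x')\|_{2}\,p_{|r-r'|}(y-y')\to0 .
\]
That limit follows from a bound of the form $\|D_{r,y}u(t,x)\|_p\le C\,p_{t-r}(x-y)$ (possibly with a modified variance), since then the integral is $O(R^{d})$. For (ii), stationarity of the covariance gives $\mathbb E[F_R(t)F_R(s)]=\int_{\R^d}\rho_{t,s}(z)\,|B_R\cap(B_R-z)|\,dz$. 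Dominated convergence with $|B_R\cap(B_R-z)|/R^d\to\omega_d$ then yields the result, provided $\rho_{t,s}\in L^1$. Integrability follows from the chaos formula $\rho_{t,s}(z)=\sum_n n!\langle f_n(t,z),f_n(s,0)\rangle_{\mathcal H^{\otimes n}}$: each term is nonnegative, integrating in $z$ turns the outermost heat kernels into a constant, and the remaining series is summable. This summability is where $d<4$ and Schur's test enter.

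\textbf{Quantitative CLT.} For (iii) I would apply the second-order Gaussian Poincaré inequality (Nourdin--Peccati--Reinert, as in \cite{HNVZ20,NXZ22}):
\[
d_{TV}(F/\sigma,Z)\le \frac{C}{\sigma^2}\big(\mathbb E\|D^2F\otimes_1D^2F\|^2_{\mathcal H^{\otimes2}}\big)^{1/4}\big(\mathbb E\|DF\|^4_{\mathcal H}\big)^{1/4}.
\]
The key inputs are moment bounds for the Malliavin derivatives:
\[
\|D_{r,y}u(t,x)\|_p\le C\,p_{t-r}(x-y),\qquad \|D^2_{(r,y),(r',y')}u(t,x)\|_p\le C\big(p_{t-r'}(x-y')\,p_{r'-r}(y'-y)+\text{sym.}\big).
\]
I would prove these chaos by chaos. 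The derivative $D_{r,y}I_n(f_n)=nI_{n-1}(f_n(\cdot,r,y))$ is controlled by hypercontractivity, $(p-1)^{n/2}$, together with an $L^2$ estimate of $f_n(\cdot,r,y;t,x)$ in $\mathcal H^{\otimes(n-1)}$. That estimate needs to factor out $p_{t-r}(x-y)$; I would obtain it using Schur's test on the operator with kernel $p_{|s-s'|}(y-y')$ in the time-ordered simplex. This step, producing a pointwise heat-kernel bound and not merely an $L^2$ one for non-separable noise, is the main obstacle. If the exact kernel fails, I would settle for $p_{c(t-r)}$ with a modified constant $c$. Plugging the bounds into the inequality, each $\mathcal H$ contraction becomes integrals of products of heat kernels over $B_R$. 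The contraction term is $O(R^{d})$, so its fourth root is $R^{d/4}$; the gradient term has fourth root $O(R^{d/2})$. Combined with $\sigma_R^2\asymp R^d$, which holds because $K(t,t)>0$ since the first chaos alone is positive, this gives the rate $R^{-d/2}$.

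\textbf{Functional CLT.} For (iv), convergence of the finite-dimensional distributions follows from (ii) together with the multivariate version of the second-order Poincaré inequality, with the same derivative bounds. Tightness and the continuous modification come from Kolmogorov's criterion. I would aim to show $\mathbb E|F_R(t)-F_R(s)|^p\le C R^{dp/2}|t-s|^{\alpha p/2}$ on compacts. Using Burkholder-type estimates is awkward for colored time, so I would work chaos by chaos. Hypercontractivity gives
\[
\|F_R(t)-F_R(s)\|_p\le\sum_n (p-1)^{n/2}\Big(\int_{B_R^2}n!\,\langle \Delta f_n(x),\Delta f_n(x')\rangle\Big)^{1/2},
\]
where $\Delta f_n=f_n(t)-f_n(s)$. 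The heat-kernel increment $|p_{t-r}-p_{s-r}|$ is then controlled by a Hölder estimate and the extra time interval $[s,t]$, and the spatial integration over $B_R^2$ yields $R^d$ as in (ii).
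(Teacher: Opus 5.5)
Your treatment of (i), (ii), (iv) and of the derivative bounds follows essentially the paper's lines. However, the rate in (iii) does not come out of the inequality you chose.

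\textbf{The rate in (iii).} Take your own orders: $\sigma_R^2\asymp R^d$, $(\mathbb E\|D^2F\otimes_1D^2F\|^2_{\cH^{\otimes 2}})^{1/4}\asymp R^{d/4}$ and $(\mathbb E\|DF\|_{\cH}^4)^{1/4}\asymp R^{d/2}$. The Nourdin--Peccati--Reinert product bound then gives $R^{-d}\cdot R^{d/4}\cdot R^{d/2}=R^{-d/4}$, not $R^{-d/2}$. Sharper bookkeeping cannot remove this loss:
\begin{itemize}
\item The second-chaos part of $D^2F_R(t)$ is the deterministic kernel $g_R=2\int_{B_R}\widetilde f_2(\cdot;t,x)\,dx$.
\item All chaos kernels and the covariance $p_{|t-s|}(x-y)$ are nonnegative. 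Jensen and positivity therefore give $\mathbb E\|D^2F\otimes_1D^2F\|^2\ge\|g_R\otimes_1g_R\|^2$.
\item This last quantity is genuinely of order $R^d$: its four spatial arguments are localized near one another and range over $B_R$.
\item By the Poincar\'e inequality, $\mathbb E\|DF\|^4\ge(\mathbb E\|DF\|^2)^2\ge\sigma_R^4$.
\end{itemize}
So a bound that decouples $D^2F$ from $DF$ in this product form is stuck at $R^{-d/4}$.

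\textbf{The fix.} You need the improved second-order Poincar\'e inequality (Vidotto's version, \cite[Proposition 1.8]{bnz22}), which the paper adapts as Proposition~\ref{second-order-Poincare}:
\[
d_{TV}(F,N)\le 4\sigma^{-2}\sqrt{\mathcal A(F)},
\]
where $\mathcal A(F)$ keeps the factors $\|D^2F\|_4\|D^2F\|_4\|DF\|_4\|DF\|_4$ inside a single integral against three covariance kernels. Insert the heat-kernel bounds for $D$ and $D^2$. Each of the four pieces $\mathcal A_1,\dots,\mathcal A_4$ then has ten spatial variables ($x_1,\dots,x_4\in B_R$ and $w,w',y,y',z,z'$) linked by nine heat kernels that form a tree. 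Integrating out all but one $x_i$ gives $\mathcal A(F_R(t))\le C_tR^d$, hence $d_{TV}\le C_tR^{d/2}/R^d=C_tR^{-d/2}$. For (iv), the weaker rate still yields convergence of the finite-dimensional distributions, so only (iii) is affected.

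\textbf{A smaller point on (ii).} If you integrate each chaos term in $z$ before summing, $\int f_n(\cdot;t,z)\,dz$ is no longer in $\cH^{\otimes n}$. The Cauchy--Schwarz/Schur argument you invoke for summability then has nothing to act on. The paper proceeds in a different order:
\begin{itemize}
\item It uses nonnegativity to bound $\rho_{t,s}(z)\le\mathbb E\langle Du(t,z),Du(s,0)\rangle_{\cH}$.
\item It applies Cauchy--Schwarz at fixed $(r,y)$ and $(\theta,w)$.
\item It inserts $\|D_{r,y}u(t,z)\|_2\le C\,p_{t-r}(z-y)$.
\item Only then does it integrate in $z$, obtaining $\int\rho_{t,s}\le C_T\,ts$.
\end{itemize}
You already prove that derivative bound, so this repair is immediate.
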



The rest of the paper is organized as follows. In Section~\ref{sec:preliminaries}, we recall some preliminaries on Malliavin calculus. Section~\ref{sec:estimates} is devoted to moment estimates for the first and second Malliavin derivatives of the solution. In Section~\ref{sec:main-results}, we prove the four assertions of Theorem~\ref{thm:main}.

\section{Preliminaries}\label{sec:preliminaries}
In this section, we provide some preliminaries on the Malliavin calculus. We refer to \cite{Nualart,Hu} for more details. 

Let $\cH$ be the completion of the
space $C_c^\infty(\R_+\times\R^d)$ of smooth functions with compact
support, equipped with the inner product
\begin{align} \label{eq-H norm}
    \langle f,g \rangle_{\cH}
    =& \int_{\bR_+^{2}} \int_{\bR^{2d}} f(s,x) g(t,y) p_{|t-s|}(x-y) dxdy dsdt \nonumber \\
    =& c_d\int_{\bR_+^{2} } \int_{\bR^d} \cF f(s,\xi)\overline{\cF g(t,\xi)} \cF p_{|t-s|}(\xi) d\xi dsdt.
\end{align}
Here $\cF$ denotes the Fourier transform in the spatial variables only;
in particular, $\cF f(s,\xi)=\int_{\R^d}e^{-i\xi\cdot x}f(s,x)dx$.
With this convention, $c_d=(2\pi)^{-d}$ and
$\cF p_t(\xi)=e^{-t|\xi|^2/2}$ for $t>0$.
We denote by $\|\cdot\|_{\cH}$ the norm induced by the inner product.  

Let $W=\{W(f), f\in\mathcal H\}$ be  an isonormal Gaussian process defined on a
complete probability space $(\Omega, \mathcal F, P)$, with covariance 
\[\E[W(f)W(g)]=\la f, g \ra_\mathcal H, \,\text{ for } f,g\in \mathcal H. \]
For $f\in \mathcal H^{\otimes m}$, denote the  symmetrization of $f$ by 
\[\tilde  f(t_1, x_1, \dots, t_m,x_m) := \frac1{m!}\sum_{\sigma\in S_m} f(t_{\sigma(1)}, x_{\sigma(1)}, \dots,t_{\sigma(m)}, x_{\sigma(m)}),\]
where $S_m$ is the set of all permutations of $\{1,2,\dots,m\}.$  Let ${\mathcal H}^{\tilde \otimes m}$ be the symmetrization  of ${\mathcal H}^{\otimes m}$. Then for $f\in {\mathcal H}^{\tilde \otimes m}$,  one can define the $m$th multiple Wiener integral $I_m(f)$ by a limiting argument.  Moreover, for $f \in {\mathcal H}^{\tilde \otimes m}$ and  $g\in {\mathcal  H}^{\tilde \otimes n}$, we have
\begin{equation*}\label{e:ImIn}
	{\mathbb E}[I_m(f)I_n(g)]=m!\langle f,g\rangle_{\mathcal  H^{\otimes m}}\delta_{mn}, 
\end{equation*}
where $\delta_{mn}$ is the Kronecker delta function.  For $f\in {\mathcal  H}^{\otimes m}$ which is not necessarily symmetric, we simply define $I_m(f):= I_m(\tilde f).$  For a square integrable random variable $F$ which is measurable with respect to the $\sigma$-algebra generated by $W$, it has the following unique expansion (Wiener chaos expansion):
\begin{equation}\label{weiner chaos}
F=\E[F]+\sum_{n=1}^\infty I_n(f_n), ~ f_n\in \mathcal H^{\tilde  \otimes n}.
\end{equation}

  Let $F$ be a smooth  cylindrical random variable, i.e., $F$ is of the form
$	F=f(W(\phi_1),\cdots,W(\phi_n)),$
where $\phi_i\in \mathcal H$ and $f$ is a smooth function of which all the derivatives are of polynomial growth. Then the Malliavin derivative $DF$ of $F$ is defined by
\begin{align*}
	DF=\sum\limits_{j=1}^n\frac{\partial f}{\partial x_j}(W(\phi_1),\cdots,W(\phi_n))\phi_j.
\end{align*}
Noting that the operator $D$ is closable from $L^2(\Omega)$ into $L^2(\Omega;\mathcal H)$, we can define the Sobolev space $\mathbb D^{1,2}$ as the closure of the space $\mathfrak S$ of all  smooth cylindrical random variables under the norm
\begin{align*}
	\|F\|_{1,2}=\left(\E[F^2]+\E[\|DF\|^2_{\mathcal H}]\right)^{1/2}.
\end{align*}
Similarly, one can define the $k$th Malliavin derivative $D^k F$ as an $\mathcal H^{\otimes k}$-valued variable for $k\ge 2$, and for any $p>1$ let $\mathbb D^{k,p}$ be the completion 
of $\mathfrak S$ under the norm \[\|F\|_{k,p}=\bigg(\E[|F|^p]+\sum_{j=1}^k\E[\|D^jF\|^p_{\mathcal H^{\otimes j}}]\bigg)^{1/p}.\]

The divergence operator $\delta$ (also called Skorohod integral) is defined by the duality 
\begin{align*}
	\E[\delta(u)F]=\E[\la DF,u\ra_{\mathcal H}]
\end{align*}
for $F\in\mathbb D^{1,2}$ and $u\in \text{Dom}(\delta)$, {where $\operatorname{Dom}(\delta)$ consists of all
$u\in L^2(\Omega;\mathcal H)$ such that
\[
\big|\E[\langle DF,u\rangle_{\mathcal H}]\big|
\leq c_u\|F\|_2.
\]} 

We next recall the Ornstein-Uhlenbeck operator and its pseudo-inverse, which will be used in the Malliavin--Stein analysis below; see, e.g., \cite[Section~1.4]{Nualart}.

Suppose that $F\in L^2(\Omega)$ admits the Wiener chaos expansion 
\eqref{weiner chaos}.
The Ornstein--Uhlenbeck generator $L$ is defined by
\begin{align*}
LF
=
-\sum_{n\geq1}nI_n(f_n),
\end{align*}
provided that the series converges in $L^2(\Omega)$. It is well known
that $F\in\operatorname{Dom}(L)$ if and only if
$F\in\mathbb D^{1,2}$ and $DF\in\operatorname{Dom}(\delta)$. In this
case,
\begin{align*}
LF=-\delta(DF).
\end{align*}

The pseudo-inverse $L^{-1}$ of $L$ is defined by
\begin{align*}
L^{-1}F
=
-\sum_{n\geq1}\frac{1}{n}I_n(f_n).
\end{align*}
In particular,
\begin{align*}
LL^{-1}F
=
F-\E[F].
\end{align*}

For every $F\in\mathbb D^{1,2}$ satisfying $\E[F]=0$, the random
element
$-DL^{-1}F$
belongs to $\operatorname{Dom}(\delta)$, and
\begin{align}\label{e:F-delta-DL}
F
=
\delta\left(-DL^{-1}F\right).
\end{align}

\section{Estimates on the Malliavin derivatives}\label{sec:estimates}
By \cite{23swy}, for $d<4$, equation \eqref{eq-SPDE} admits a unique
square-integrable mild Skorohod solution, which we denote by $u$.
 In this section, we establish moment estimates for the first and second
Malliavin derivatives of the solution. 

To this end, we first introduce an auxiliary space $\cX$, defined as
the completion of $C_c^\infty(\R_+\times\R^d)$ equipped with the
inner product
\begin{equation*}
\langle f,g\rangle_{\cX}
=c_d\int_0^\infty\int_{\R^d}
\cF f(r,\xi)\overline{\cF g(r,\xi)}\,\mu(d\xi)\,dr,
\end{equation*}
where $\mu(d\xi)=\frac{d\xi}{1+|\xi|^2}$.


The following proposition allows us to control the $\cH^{\otimes n}$-norms
by the corresponding $\cX^{\otimes n}$-norms, which will be used to
establish the Malliavin derivative estimates.
\begin{proposition}\label{lem:full-schur}
For every $T>0$, there exists $C_T<\infty$ such that, for every
$f\in\cX$ supported in $[0,T]\times\R^d$, we have $f\in\cH$ and
\begin{equation}\label{eq:full-H-X}
\|f\|_{\cH}^2\le C_T\|f\|_{\cX}^2.
\end{equation}
Moreover, for every $n\ge1$ and every $f\in\cX^{\otimes n}$ supported
in $[0,T]^n\times\R^{dn}$, we have $f\in\cH^{\otimes n}$ and
\begin{equation}\label{eq:full-tensor-embedding}
\|f\|_{\cH^{\otimes n}}^2
\le C_T^n\|f\|_{\cX^{\otimes n}}^2.
\end{equation}
\end{proposition}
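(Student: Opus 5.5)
Work in Fourier variables and apply Schur's test to the time integral, frequency by frequency. For $f$ smooth with compact support in $[0,T]\times\R^d$, the second expression in \eqref{eq-H norm} gives
\begin{equation*}
\|f\|_{\cH}^2=c_d\int_{\R^d}\int_0^T\int_0^T \cF f(s,\xi)\overline{\cF f(t,\xi)}\,e^{-|t-s||\xi|^2/2}\,ds\,dt\,d\xi .
\end{equation*}
Fix $\xi$ and view $k_\xi(s,t)=e^{-|t-s||\xi|^2/2}\mathbf 1_{[0,T]^2}(s,t)$ as a symmetric kernel on $L^2([0,T])$. By Schur's test with constant test function, its operator norm is at most
\begin{equation*}
\sup_{s\in[0,T]}\int_0^T e^{-|t-s||\xi|^2/2}\,dt\le \min\Bigl(T,\frac{4}{|\xi|^2}\Bigr)\le \frac{C_T}{1+|\xi|^2},
\end{equation*}
with, for instance, $C_T=2\max(T,4)$. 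Hence for each $\xi$,
\begin{equation*}
\int_0^T\int_0^T \cF f(s,\xi)\overline{\cF f(t,\xi)}k_\xi(s,t)\,ds\,dt\le \frac{C_T}{1+|\xi|^2}\int_0^T|\cF f(r,\xi)|^2dr .
\end{equation*}
Integrating in $\xi$ yields \eqref{eq:full-H-X} for smooth compactly supported $f$ with support in $[0,T]\times\R^d$.

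To pass to general $f\in\cX$ supported in $[0,T]\times\R^d$, approximate $f$ in $\cX$ by elements of $C_c^\infty$ supported in $[0,T']\times\R^d$ with $T'$ slightly larger than $T$, for example by mollifying and truncating in time. The constant only depends monotonically on $T$, so replacing $T$ by $T'$ costs nothing essential. The inequality then shows that the approximating sequence is Cauchy in $\cH$, so it converges in $\cH$. The limit is identified with $f$ because both norms are computed on the Fourier side, so the $\cH$-limit and the $\cX$-limit agree as distributions. This gives $f\in\cH$ together with the bound.

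For the tensor statement, the $\cH^{\otimes n}$ norm is
\begin{equation*}
c_d^n\int\int \cF f(\mathbf s,\boldsymbol\xi)\overline{\cF f(\mathbf t,\boldsymbol\xi)}\prod_{j=1}^n e^{-|t_j-s_j||\xi_j|^2/2}\,d\mathbf s\,d\mathbf t\,d\boldsymbol\xi .
\end{equation*}
For fixed $\boldsymbol\xi$, this is a quadratic form with the product kernel $\prod_j k_{\xi_j}(s_j,t_j)$ on $L^2([0,T]^n)$. Schur's test with constant test function applies to the product directly, since the row integrals factor. This gives operator norm at most $\prod_j C_T(1+|\xi_j|^2)^{-1}$. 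Integrating over $\boldsymbol\xi$ yields $C_T^n\|f\|^2_{\cX^{\otimes n}}$, and density extends the bound to all of $\cX^{\otimes n}$ as before.

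The main obstacle is the density and identification step, that is, making sure the abstract completions are compatible. The key inequality itself is just the Schur bound $\min(T,4|\xi|^{-2})\lesssim(1+|\xi|^2)^{-1}$, whose essential point is that the time-correlation kernel integrates to order $|\xi|^{-2}$ at high frequency.
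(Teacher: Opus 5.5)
Your proposal is correct and follows the paper's proof essentially step for step. For each fixed frequency you bound the time kernel $e^{-|t-s||\xi|^2/2}$ by Schur's test with a constant test function, getting $\min\{T,4/|\xi|^2\}\le C_T/(1+|\xi|^2)$ with the same constant $C_T=\max\{2T,8\}$. You then treat the tensor case through the factorized product kernel and extend by density. Your approximation by functions supported in $[0,T']\times\R^d$ is a slightly more careful version of the density step that the paper dismisses in one line.
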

\begin{proof}
It suffices to prove the estimates on the corresponding smooth
compactly supported subspaces; the general case follows by density
and continuity.

Fix $\xi\in\R^d$.
If $\xi\ne0$, then
\begin{align*}
&\int_0^T e^{-\frac12|r-s||\xi|^2}ds
=\int_0^r e^{-\frac12(r-s)|\xi|^2}ds
+\int_r^T e^{-\frac12(s-r)|\xi|^2}ds\\
&=\frac{2}{|\xi|^2}
\left(2-e^{-r|\xi|^2/2}
-e^{-(T-r)|\xi|^2/2}\right)\le\frac4{|\xi|^2}.
\end{align*}
Hence,
\begin{align}\label{eq:Scur}
\sup_{r\in[0,T]}
\int_0^T e^{-\frac12|r-s||\xi|^2}ds
\le \min\left\{T,\frac4{|\xi|^2}\right\}
\le\frac{C_T}{1+|\xi|^2},
\end{align}
where  one may
take $C_T=\max\{2T,8\}$. 
For $\xi=0$, the time integral equals $T$, so the same bound holds.

{For fixed $\xi\in\R^d$, we apply
Lemma~\ref{lem:full-schur-test} with 
\[
K(r,s)=e^{-\frac12|r-s||\xi|^2},
\qquad
g(r)=\cF f(r,\xi),
\qquad
A=\frac{C_T}{1+|\xi|^2},
\]
then we have
\[
\left|\int_0^T\int_0^T
e^{-\frac12|r-s||\xi|^2}
\cF f(r,\xi)\overline{\cF f(s,\xi)}
drds\right|
\le \frac{C_T}{1+|\xi|^2}
\int_0^T|\cF f(r,\xi)|^2dr.
\]
Multiplying by $c_d$ and integrating in $\xi$ gives \eqref{eq:full-H-X}.
}

For the tensor-product estimate \eqref{eq:full-tensor-embedding}, write
\[
\boldsymbol r=(r_1,\ldots,r_n),\qquad
\boldsymbol s=(s_1,\ldots,s_n),\qquad
\bxi=(\xi_1,\ldots,\xi_n),
\]
with
$d\boldsymbol r=dr_1\cdots dr_n,
d\boldsymbol s=ds_1\cdots ds_n.$
{For fixed $\bxi\in\R^{dn}$, we have by \eqref{eq:Scur}
\begin{align*}
&\sup_{\boldsymbol r\in[0,T]^n}
\int_{[0,T]^n}
\prod_{j=1}^n
e^{-\frac12|r_j-s_j||\xi_j|^2}
d\boldsymbol s=\prod_{j=1}^n
\sup_{r_j\in[0,T]}
\int_0^T e^{-\frac12|r_j-s_j||\xi_j|^2}ds_j\le C_T^n\prod_{j=1}^n\frac1{1+|\xi_j|^2}.
\end{align*}
Since $f\in\cX^{\otimes n}$, the function
$\boldsymbol r\mapsto\cF f(\boldsymbol r,\bxi)$ belongs to
$L^2([0,T]^n)$ for almost every $\bxi$.
For each such $\bxi$,  applying
Lemma~\ref{lem:full-schur-test} yields
\begin{align*}
&\left|
\int_{[0,T]^n}\int_{[0,T]^n}
\prod_{j=1}^n
e^{-\frac12|r_j-s_j||\xi_j|^2}
\cF f(\boldsymbol r,\bxi)
\overline{\cF f(\boldsymbol s,\bxi)}
d\boldsymbol sd\boldsymbol r
\right|\le
C_T^n\prod_{j=1}^n\frac1{1+|\xi_j|^2}
\int_{[0,T]^n}|\cF f(\boldsymbol r,\bxi)|^2
d\boldsymbol r.
\end{align*}
Multiplying by $c_d^n$ and integrating in $\bxi$, we obtain
\begin{align*}
\|f\|_{\cH^{\otimes n}}^2
&\le c_d^nC_T^n
\int_{\R^{dn}}\int_{[0,T]^n}
|\cF f(\boldsymbol r,\bxi)|^2
d\boldsymbol r
\prod_{j=1}^n\frac{d\xi_j}{1+|\xi_j|^2}=C_T^n\|f\|_{\cX^{\otimes n}}^2,
\end{align*}
which proves \eqref{eq:full-tensor-embedding}.}
\end{proof}
In the above proof, we have used the following special case of Schur's test.
 \begin{lemma}\label{lem:full-schur-test}
Let $T>0$ and let $K:[0,T]^2\to\R$ be a symmetric
measurable kernel satisfying
\[
\sup_{r\in[0,T]}\int_0^T|K(r,s)|ds\le A<\infty.
\]
Then, for every $g\in L^2(0,T)$,
\[
\left|\int_0^T\int_0^T
K(r,s)g(r)\overline{g(s)}dsdr\right|
\le A\int_0^T|g(r)|^2dr.
\]
The same conclusion holds on $[0,T]^n$, with $r,s$
replaced by $\boldsymbol r,\boldsymbol s$ and integration
taken with respect to $n$-dimensional Lebesgue measure.
\end{lemma}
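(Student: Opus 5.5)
The plan is the standard Schur-test argument: a pointwise AM--GM bound, followed by the row-sum hypothesis and symmetry of $K$. Set
\[
I=\int_0^T\int_0^T K(r,s)g(r)\overline{g(s)}\,ds\,dr .
\]
First, I will justify that $I$ is absolutely convergent. This follows once we bound $\int\!\int |K(r,s)||g(r)||g(s)|\,ds\,dr$, which the next step does. Then $|I|$ is at most this double integral of absolute values.

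Second, I apply the elementary inequality $|g(r)||g(s)|\le \frac12\big(|g(r)|^2+|g(s)|^2\big)$ pointwise, which gives
\[
|I|\le \frac12\int_0^T\int_0^T |K(r,s)||g(r)|^2\,ds\,dr+\frac12\int_0^T\int_0^T |K(r,s)||g(s)|^2\,ds\,dr .
\]
The integrands are nonnegative and measurable, so Tonelli's theorem allows the integrals to be done in either order.

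Third, in the first term I integrate in $s$ first and use $\sup_r\int_0^T|K(r,s)|ds\le A$, which bounds it by $\frac A2\int_0^T|g(r)|^2dr$. In the second term I integrate in $r$ first. By symmetry $K(r,s)=K(s,r)$, so $\int_0^T|K(r,s)|dr=\int_0^T|K(s,r)|dr\le A$ for every $s$. This bounds the second term by $\frac A2\int_0^T|g(s)|^2ds$.

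Adding the two bounds gives $|I|\le A\|g\|_{L^2(0,T)}^2$. It also confirms the absolute convergence claimed in the first step, since the bound is finite for $g\in L^2$.

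For the $n$-dimensional version, I repeat the same three steps verbatim with $[0,T]$ replaced by $[0,T]^n$ and $ds,dr$ replaced by $d\boldsymbol s,d\boldsymbol r$. Nothing uses one-dimensionality: the argument needs only symmetry, measurability, the row-sum bound and Tonelli.

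In the application in Proposition~\ref{lem:full-schur}, the kernel is $\prod_j e^{-\frac12|r_j-s_j||\xi_j|^2}$. It is symmetric in $(\boldsymbol r,\boldsymbol s)$, and its row-sum bound is the product estimate already displayed there.

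There is no real obstacle. The only points needing care are invoking Tonelli before using the sup bound, and remembering that symmetry of $K$ is what converts the row-sum bound into the column-sum bound needed for the second term.
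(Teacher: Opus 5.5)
Your proposal is correct and follows essentially the same argument as the paper: the pointwise bound $2|g(r)||g(s)|\le|g(r)|^2+|g(s)|^2$, symmetry of $K$ to turn the row-sum bound into a column-sum bound, and the hypothesis $\sup_r\int_0^T|K(r,s)|ds\le A$, with the $n$-dimensional case handled identically. Your explicit use of Tonelli to justify absolute convergence and the choice of integration order is a small clarification that the paper leaves implicit.
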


\begin{proof}
By $2|g(r)||g(s)|\le |g(r)|^2+|g(s)|^2$ and 
the symmetry of $K$, we have
\begin{align*}
&\int_0^T\int_0^T
|K(r,s)||g(r)||g(s)|dsdr\\
&\le
\frac12\int_0^T\int_0^T
|K(r,s)|\Big(|g(r)|^2+|g(s)|^2\Big)dsdr\\
&=
\int_0^T|g(r)|^2
\left(\int_0^T|K(r,s)|ds\right)dr\\
&\le A\int_0^T|g(r)|^2dr.
\end{align*}
This proves absolute integrability and the desired inequality.
The proof on $[0,T]^n$ is identical.
\end{proof}

For $N>1$, set
\begin{equation*}
D_N=\int_{|\xi|\le N}\mu(d\xi),
\qquad
C_N=\int_{|\xi|>N}\frac{\mu(d\xi)}{|\xi|^2}.
\end{equation*}
The following lemma records the integrability properties of $\mu$
needed for the subsequent kernel estimates.
\begin{lemma}\label{lem:full-DN-CN}
	If $d<4$, then $D_N<\infty$ for every $N>1$, and
	$C_N\to0$ as $N\to\infty$.
\end{lemma}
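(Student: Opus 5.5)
The plan is to compute both quantities directly in polar coordinates, splitting each integral at $|\xi|=1$ so that the density of $\mu(d\xi)=\frac{d\xi}{1+|\xi|^2}$ can be bounded by $1$ near the origin and by $|\xi|^{-2}$ away from it.

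For $D_N$, we bound the density $\frac{1}{1+|\xi|^2}$ by $1$ on $\{|\xi|\le N\}$. This gives
\[
D_N\le \int_{|\xi|\le N}d\xi=\omega_d N^d<\infty .
\]
Note that this estimate holds for every dimension; the restriction $d<4$ plays no role here.

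For $C_N$, we use $\frac{1}{1+|\xi|^2}\le |\xi|^{-2}$ and then pass to polar coordinates in $\R^d$, where $\sigma_{d-1}=d\,\omega_d$ denotes the surface area of the unit sphere. This gives
\[
C_N=\int_{|\xi|>N}\frac{d\xi}{(1+|\xi|^2)|\xi|^2}
\le \int_{|\xi|>N}|\xi|^{-4}d\xi
=\sigma_{d-1}\int_N^\infty \rho^{d-5}\,d\rho .
\]
This is exactly where $d<4$ enters. Since $d-5<-1$, the radial integral is finite and equals $\frac{N^{d-4}}{4-d}$, so $C_N\le \frac{\sigma_{d-1}}{4-d}N^{d-4}$. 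For $d=1$ the same computation is read as the sum of the two half-lines.

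Because $d<4$, the bound $\frac{\sigma_{d-1}}{4-d}N^{d-4}$ tends to $0$ as $N\to\infty$. This proves $C_N\to 0$. It also shows that $C_N$ is finite for every $N>1$, although the lemma does not require this.

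There is no real obstacle in this argument. The only point needing care is noting that finiteness and decay of $C_N$ rely on the tail exponent $d-5<-1$, which is equivalent to $d<4$.
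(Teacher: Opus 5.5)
Your proof is correct and follows essentially the same approach as the paper. You bound the density by $1$ to get $D_N\le\omega_dN^d$, and by $|\xi|^{-2}$ followed by polar coordinates to get $C_N\le\frac{d\omega_d}{4-d}N^{d-4}$; the paper's preliminary rescaling $\xi=N\eta$ is only cosmetic, and the splitting at $|\xi|=1$ announced in your opening plan is never actually needed since $N>1$.
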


\begin{proof}
	Since $(1+|\xi|^2)^{-1}\le1$,
	\begin{align*}
		D_N
		&=\int_{|\xi|\le N}\frac{d\xi}{1+|\xi|^2}
		\le \big|\{|\xi|\le N\}\big|<\infty.
	\end{align*}
    Using the change of variables $\xi=N\eta$ and polar coordinates,
we obtain
\begin{align*}
0\le C_N
&=\int_{|\xi|>N}\frac{d\xi}{|\xi|^2(1+|\xi|^2)}
\le \int_{|\xi|>N}\frac{d\xi}{|\xi|^4}\\
&=N^{d-4}\int_{|\eta|>1}\frac{d\eta}{|\eta|^4}\\
&=d\omega_d N^{d-4}\int_1^\infty r^{d-5}\,dr\\
&=\frac{d\omega_d}{4-d}N^{d-4}\longrightarrow0,
\end{align*}
as $N\to\infty$ where we used $d<4$.
\end{proof}
\subsection{Estimate for the first-order Malliavin derivative}
\label{subsec:full-first-malliavin-derivative}
Recall that the solution $u$ to equation \eqref{eq-SPDE} has the following chaos expansion
\begin{equation}\label{eq:full-solution-chaos}
u(t,x)=1+\sum_{n\ge1}I_n(f_n(\cdot;t,x)),
\end{equation}
where for  $n\ge1$, 
\begin{align*}f_n(t_1,x_1,\ldots,t_n,x_n;t,x)=\mathbf 1_{\{0<t_1<\cdots<t_n<t\}}
p_{t-t_n}(x-x_n)
\prod_{j=1}^{n-1}p_{t_{j+1}-t_j}(x_{j+1}-x_j).
\end{align*}
Denote the  symmetrization  of $f_n$ by $\widetilde f_n$, i.e.,
\begin{equation*}
\widetilde f_n(\boldsymbol t,\boldsymbol x;t,x)
=\frac1{n!}f_n
(t_{\tau(1)},x_{\tau(1)},\ldots,t_{\tau(n)},x_{\tau(n)};t,x),
\end{equation*}
where $\boldsymbol t=(t_1,\ldots,t_n)$,
$\boldsymbol x=(x_1,\ldots,x_n)$ and $\tau\in S_n$ is the unique permutation such that
$
t_{\tau(1)}<t_{\tau(2)}<\cdots<t_{\tau(n)}.$ Set $f_0=\widetilde f_0=1$.

We next justify the following chaos expansion for the Malliavin derivative:
\begin{equation}\label{chaos for Du}
	D_{r,z}u(t,x)
	=
	\sum_{n\ge1}nI_{n-1}
	\big(
	\widetilde f_n(\cdot,r,z;t,x)
	\big),
	\qquad 0<r<t,\quad z\in\R^d,
\end{equation}
where
\begin{align}\label{re:fg1}
\widetilde f_n(\cdot,r,z;t,x)
&=\frac1n\sum_{j=1}^n h_j^{(n)}(\cdot,r,z;t,x),
\end{align}
with $h_j^{(n)}(\cdot,r,z;t,x)$ being the symmetrization, in the remaining
$n-1$ variables, of
\begin{align*}
&f_j^{(n)}(s_1,y_1,\ldots,s_{n-1},y_{n-1},r,z;t,x)=f_n(s_1,y_1,\ldots,s_{j-1},y_{j-1},r,z,
s_j,y_j,\ldots,s_{n-1},y_{n-1};t,x).
\end{align*}
More precisely, to justify \eqref{chaos for Du}, we first prove that
$u(t,x)\in\mathbb D^{1,2}$ and that, for every $0<r<t$ and
$z\in\R^d$, the series on the right-hand side of \eqref{chaos for Du}
converges in $L^2(\Omega)$. Its limit is $D_{r,z}u(t,x)$.

For $n\ge1$, denote, on $r<t_1<\cdots<t_n<t$,
\begin{align*}
&g_n(t_1,x_1,\ldots,t_n,x_n,r,z;t,x)=p_{t-t_n}(x-x_n)
\prod_{j=1}^{n-1}p_{t_{j+1}-t_j}(x_{j+1}-x_j)
p_{t_1-r}(x_1-z),
\end{align*}
and extend it by zero outside this ordered time region.  Set
$g_0(\cdot,r,z;t,x)=p_{t-r}(x-z)$.

By the definitions, for $1\le j\le n$, we have
\begin{align}\label{decom:fj}
&f_j^{(n)}(s_1,y_1,\ldots,s_{n-1},y_{n-1},r,z;t,x)\notag\\
&=
f_{j-1}(s_1,y_1,\ldots,s_{j-1},y_{j-1};r,z)\times
g_{n-j}(s_j,y_j,\ldots,s_{n-1},y_{n-1},r,z;t,x),
\end{align}
with the above conventions for $f_0$ and $g_0$.


The following auxiliary estimates are adaptations of
\cite[(3.10) and the proof of (3.17)]{NXZ22};
see also \cite[(21) and (28)]{BY23}.
\begin{lemma}\label{lem:full-f-estimate}
	There is $c\ge1$, independent of $n,N,t,x$, such that
	\begin{equation*}\label{eq:full-f-estimate}
		\|\widetilde f_n(\cdot;t,x)\|_{\cX^{\otimes n}}^2
		\le\frac{c^n}{n!}
		\sum_{\ell=0}^n
		\frac{t^\ell D_N^\ell}{\ell!}(4C_N)^{n-\ell}.
	\end{equation*}
\end{lemma}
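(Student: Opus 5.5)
Since $\widetilde f_n(\cdot;t,x)$ is supported on $[0,t]^n\times\R^{dn}$ and is the symmetrization of $f_n$, whose support is the simplex $\{0<t_1<\cdots<t_n<t\}$, we have $\|\widetilde f_n\|_{\cX^{\otimes n}}^2=\frac1{n!}\|f_n\mathbf 1_{\{0<t_1<\cdots<t_n<t\}}\|_{\cX^{\otimes n}}^2$. The cross terms between different permutations vanish because the $\cX$ inner product is local in time: it is $L^2$ in $r$, with no time correlation. So it suffices to show
\[
\int_{0<t_1<\cdots<t_n<t}\int_{\R^{dn}}|\cF f_n(\boldsymbol t,\bxi;t,x)|^2\prod_{j=1}^n\mu(d\xi_j)\,d\boldsymbol t
\le c^n\sum_{\ell=0}^n\frac{t^\ell D_N^\ell}{\ell!}(4C_N)^{n-\ell}.
\]

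The first step is to compute the spatial Fourier transform of the product of heat kernels. Substituting $y_n=x-x_n$ and $y_j=x_{j+1}-x_j$, the Fourier variables telescope, and with $\eta_k=\xi_1+\cdots+\xi_k$ we get
\[
|\cF f_n(\boldsymbol t,\bxi;t,x)|=\prod_{k=1}^n e^{-\frac12(t_{k+1}-t_k)|\eta_k|^2},\qquad t_{n+1}:=t.
\]
The second step is to control $\mu$ in the shifted variables. The standard device from \cite{NXZ22} is to integrate out $\xi_1$ first, then $\xi_2$, and so on, in the form
\[
\sup_{\eta\in\R^d}\int_{\R^d}e^{-s|\xi+\eta|^2}\mu(d\xi)\le C\int_{\R^d}e^{-s|\xi|^2}\mu(d\xi).
\]
This holds because $(1+|\xi-\eta|^2)^{-1}$ against a Gaussian centered at $\eta$ is maximized, up to a constant, at $\eta=0$; one can see this by writing $(1+|\xi|^2)^{-1}=\int_0^\infty e^{-a}e^{-a|\xi|^2}da$ and using that Gaussian convolutions peak at the origin. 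The constant $C$ is absorbed into $c$. Iterating over $j$ reduces the bound to
\[
\int_{0<t_1<\cdots<t_n<t}\prod_{k=1}^n\phi(t_{k+1}-t_k)\,d\boldsymbol t,\qquad \phi(s):=\int_{\R^d}e^{-s|\xi|^2}\mu(d\xi).
\]
(Because of the squared modulus, the exponent is $e^{-s|\xi|^2}$ rather than $e^{-\frac12 s|\xi|^2}$.)

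The third step splits $\phi(s)\le\phi_1(s)+\phi_2(s)$ into the regions $|\xi|\le N$ and $|\xi|>N$. On the low frequencies, $\phi_1(s)\le D_N$. On the high frequencies, $\phi_2(s)=\int_{|\xi|>N}e^{-s|\xi|^2}\mu(d\xi)$ has time integral
\[
\int_0^\infty\phi_2(s)\,ds=C_N .
\]
Expanding $\prod_k(\phi_1+\phi_2)$ gives $2^n$ terms. For a term with $\ell$ factors $\phi_1$, bound those factors by $D_N$. Integrating out the increments carrying $\phi_2$ over $(0,\infty)$ costs a factor $C_N$ each. The remaining $\ell$ free increments range over a simplex of volume at most $t^\ell/\ell!$. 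Collecting the binomial choices gives at most $\binom n\ell t^\ell D_N^\ell C_N^{n-\ell}/\ell!$. Bounding $\binom n\ell\le 2^n$ and absorbing the constants yields the factor $c^n(4C_N)^{n-\ell}$, the $4$ coming from whatever constants appear in the Gaussian-shift step.

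The main obstacle is the shift-invariance step: the lemma requires uniformity in $\eta$ for a weight $\mu$ that is not translation invariant. I would handle it via the subordination formula above. If that proved awkward, the fallback is $1+|\xi|^2\ge\frac12(1+|\xi+\eta|^2)/(1+|\eta|^2)$, but this fails uniformly in $\eta$. The cleaner route is therefore to bound $\int e^{-s|\xi+\eta|^2}(1+|\xi|^2)^{-1}d\xi$ directly by splitting the integral into $|\xi|\le|\xi+\eta|$ and its complement.
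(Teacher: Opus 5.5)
Your proposal is correct, and it is the standard argument the paper relies on when it cites \cite[(3.10) and (3.17)]{NXZ22} instead of proving the lemma. The steps match: disjoint time supports give $\|\widetilde f_n\|_{\cX^{\otimes n}}^2=\frac1{n!}\|f_n\|_{\cX^{\otimes n}}^2$; your subordination argument gives $\sup_{\eta}\int_{\R^d}e^{-s|\xi+\eta|^2}\mu(d\xi)\le\phi(s)$ with constant $1$, so the fallback discussion is unnecessary; and the frequency split at $N$ with the count over subsets gives the bound with $c=2$ and $C_N$ in place of $4C_N$. The one thing to fix is the order of integration: $\xi_1$ enters every $\eta_k$ while $\xi_n$ enters only $\eta_n$, so you must integrate out $\xi_n$ first (uniformly in $\eta_{n-1}$), then $\xi_{n-1}$, and so on down to $\xi_1$.
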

\begin{lemma}\label{lem:full-g-estimate}
	There is $c\ge1$, independent of $n,N,r,z,t,x$, such that
	\begin{equation*}\label{eq:full-g-estimate}
		\|g_n(\cdot,r,z;t,x)\|_{\cX^{\otimes n}}^2
		\le c^{n+1}p_{t-r}^2(x-z)
		\sum_{\ell=0}^{n}
		\frac{[(t-r)D_N]^\ell}{\ell!}(4C_N)^{n-\ell}.
	\end{equation*}
\end{lemma}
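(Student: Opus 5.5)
We prove the bound on $\|g_n(\cdot,r,z;t,x)\|_{\cX^{\otimes n}}^2$ by computing the norm in Fourier variables and integrating out the time and frequency variables one at a time, from the innermost (closest to $t$) down to the first increment after $r$. This is the same strategy as for Lemma~\ref{lem:full-f-estimate}, following \cite[(3.10),(3.17)]{NXZ22}.

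\textbf{Fourier representation.} Since $\cX$ is a diagonal-in-time space, $\|g_n\|_{\cX^{\otimes n}}^2=c_d^n\int_{r<t_1<\cdots<t_n<t}\int_{\R^{dn}}|\cF g_n(\boldsymbol t,\bxi)|^2\prod_j\mu(d\xi_j)\,d\boldsymbol t$. The Fourier transform in $(x_1,\dots,x_n)$ of the chain of heat kernels is explicit. Writing $\eta_k=\xi_k+\cdots+\xi_n$ with $t_{n+1}=t$ and $t_0=r$, we get
\[
|\cF g_n|= \Big|c\int e^{-i\zeta\cdot(x-z)}\,e^{-\frac{(t-t_n)|\zeta|^2}{2}}\prod_{k}e^{-\frac{(t_k-t_{k-1})|\zeta-\eta_k|^2}{2}}\,d\zeta\Big|,
\]
after one extra dual frequency $\zeta$ for the endpoint $x$. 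This is awkward because of the prefactor $p_{t-r}(x-z)^2$ we want.

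\textbf{Avoiding the extra frequency.} It is better to work in physical space with a semigroup/Gaussian identity. For fixed $t_k$, the product $p_{t-t_n}(x-x_n)\cdots p_{t_1-r}(x_1-z)$ equals $p_{t-r}(x-z)$ times the density of the Brownian bridge from $(r,z)$ to $(t,x)$ evaluated at $(t_1,x_1),\dots,(t_n,x_n)$. Hence $g_n=p_{t-r}(x-z)\,\phi_n$, where the spatial Fourier transform of $\phi_n$ is the bridge characteristic function
\[
\E\exp\Big(-i\sum\xi_k\cdot B^{\rm br}_{t_k}\Big)=e^{-i(\cdot)}\exp\Big(-\tfrac12\mathrm{Var}\big(\textstyle\sum\xi_k\cdot B^{\rm br}_{t_k}\big)\Big).
\]
Its modulus is bounded by $\exp(-\tfrac12\sum_k (t_{k}-t_{k-1})\frac{t-t_k}{t-t_{k-1}}|\eta_k|^2)$ in suitable variables. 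Using the Markov decomposition of bridge variance, this is $\le \prod_k e^{-c'(t_k-t_{k-1})|\eta_k|^2}$ up to a constant per factor; the constant $c^{n+1}$ absorbs this. I expect this comparison between bridge variance and free variance to be the main obstacle, since near $t$ the bridge variance degenerates. I would therefore first try handling the last increment, where $(t-t_k)/(t-t_{k-1})$ can be small, by simply bounding that factor by $1$. That loss is exactly compensated by the $C_N$ or $D_N$ bookkeeping.

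\textbf{Iterated integration.} With the product bound in hand, change variables to $\eta_k$ (the shifts $\mu(d\xi_k)$ are handled by $\sup_{\eta}\int\frac{d\xi}{(1+|\xi|^2)}e^{-s|\xi+\eta|^2}$-type bounds, using $\frac1{1+|\eta-\eta'|^2}$ and the standard inequality from \cite{NXZ22} that the maximum over shifts is attained at zero shift). Then integrate successively. For each $k$ split $\mu$ into $|\xi|\le N$, bounded by $D_N$ and contributing a time factor $\int dt_k$, and $|\xi|>N$, bounded via $\int_0^\infty e^{-s|\xi|^2/2}ds=2/|\xi|^2$ giving $2C_N$ (with a factor 2 from the squaring, hence $4C_N$). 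Choosing $\ell$ of the $n$ indices to be low-frequency gives the simplex time volume $(t-r)^\ell/\ell!$. Summing over the $\binom n\ell$ choices, with $\binom n\ell\le 2^n$ absorbed into $c^n$, yields $c^{n+1}p_{t-r}^2(x-z)\sum_\ell\frac{[(t-r)D_N]^\ell}{\ell!}(4C_N)^{n-\ell}$.
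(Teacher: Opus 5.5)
Your proposal has a genuine gap at the step where you pass from the bridge to a free chain.

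\textbf{What is correct.} The factorization $g_n=p_{t-r}(x-z)\phi_n$, with $\phi_n$ the density of the Brownian bridge from $(r,z)$ to $(t,x)$, is right. The forward Markov decomposition of the bridge then gives the exact identity
\[
\mathrm{Var}\Bigl(\sum_k\xi_k\cdot B^{\mathrm{br}}_{t_k}\Bigr)=\sum_{j=1}^n\tau_j|\tilde\eta_j|^2,\qquad \tau_j=(t_j-t_{j-1})\frac{t-t_j}{t-t_{j-1}},
\]
where $\tilde\eta_j=\sum_{k\ge j}\frac{t-t_k}{t-t_j}\xi_k$ are time-dependent variables. With the fixed variables $\eta_k=\xi_k+\cdots+\xi_n$ this bound is false: take $n=2$, $\xi_1=0$, $t_2\uparrow t$.

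\textbf{What fails.} Your iterated integration rests on the comparison $e^{-\frac12\tau_k|\eta|^2}\le C e^{-c'(t_k-t_{k-1})|\eta|^2}$. This is false, and no constant per factor rescues it. Already for $n=1$, let $t_1\uparrow t$ and then $|\xi|\to\infty$: the left side tends to $1$ and the right side to $0$.

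Your patch does not repair this.
\begin{itemize}
\item Replacing $\frac{t-t_k}{t-t_{k-1}}$ by $1$ replaces $\tau_k$ by the larger quantity $t_k-t_{k-1}$. That is the wrong direction for an upper bound.
\item Discarding the Gaussian factor instead leaves $\int_{\R^d}\mu(d\xi)=\infty$ for $d=2,3$.
\item The degeneracy is not confined to the last increment: $\tau_k\ll t_k-t_{k-1}$ whenever $t-t_k\ll t-t_{k-1}$, for any $k$.
\end{itemize}
So the free-chain bookkeeping with $\int_0^\infty e^{-s|\xi|^2/2}ds$ and increments extended to $(0,\infty)$ is not justified as written.

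\textbf{The missing idea.} Do not compare with the free chain pointwise. Instead use that the bridge variances are still integrable in time, provided you integrate in the right order.

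First handle the frequencies. Make the triangular, unit-Jacobian change $\bxi\mapsto\tilde\eta$, where $\xi_j=\tilde\eta_j-\frac{t-t_{j+1}}{t-t_j}\tilde\eta_{j+1}$. Integrate $\tilde\eta_1,\tilde\eta_2,\dots$ in turn, using the maximum principle you already quote to absorb the shifts. This gives the bound
\[
p_{t-r}^2(x-z)\,c_d^n\int\prod_j\int e^{-\tau_j|\eta|^2}\mu(d\eta)\,d\boldsymbol t .
\]

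Next handle the times. Since $\tau_k$ depends only on $(t_{k-1},t_k)$, integrate the times backwards, $t_n$ first. Set $L=t-t_{k-1}$ and $u=t_k-t_{k-1}$. Then $\tau_k=u(L-u)/L\ge\frac12\min\{u,L-u\}$, so
\[
\int_{t_{k-1}}^t e^{-\tau_k|\eta|^2}\,dt_k\le 2\int_0^{L/2}e^{-u|\eta|^2/2}\,du\le\frac{4}{|\eta|^2}.
\]
This yields exactly the factor $4C_N$ for the high-frequency part.
\begin{itemize}
\item For low-frequency indices, use $\int_{t_{k-1}}^t\frac{(t-t_k)^m}{m!}\,dt_k=\frac{(t-t_{k-1})^{m+1}}{(m+1)!}$.
\item At high-frequency indices, bound $(t-t_k)^m\le(t-t_{k-1})^m$.
\end{itemize}
Backward induction then shows that each set $S$ of high-frequency indices contributes at most $D_N^{\,n-|S|}(4C_N)^{|S|}\frac{(t-r)^{n-|S|}}{(n-|S|)!}$. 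Summing over $S$ and using $\binom n\ell\le 2^n$ gives the lemma.
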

We now use the above estimates to prove the convergence of the
chaos series for the first Malliavin derivative and to establish its
 moment bound. 
 \begin{proposition}\label{prop:full-first-malliavin-derivative}
For every $t>0$ and $x\in\R^d$, we have
$u(t,x)\in\mathbb D^{1,2}$ and  the series in \eqref{chaos for Du} converges in
$L^2(\Omega)$.
For every $T>0$ and $p\ge2$, there exists a constant
$C_{T,p}>0$ such that
\begin{equation}\label{eq:full-first-derivative-bound}
	\|D_{r,z}u(t,x)\|_p
	\le C_{T,p}p_{t-r}(x-z),
	\qquad 0<r<t\le T,\quad x,z\in\R^d.
\end{equation}
\end{proposition}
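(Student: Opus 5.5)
The plan is to work chaos by chaos, using hypercontractivity to pass from $L^2$ to $L^p$ and the decomposition \eqref{decom:fj} to factor each kernel into a ``before $r$'' piece and an ``after $r$'' piece.

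\textbf{Step 1: reduction to $\cX$-norms.} For fixed $(r,z)$, the $(n-1)$th chaos term of \eqref{chaos for Du} is $nI_{n-1}(\widetilde f_n(\cdot,r,z;t,x))$. Hypercontractivity gives
\[
\|nI_{n-1}(\widetilde f_n(\cdot,r,z;t,x))\|_p\le (p-1)^{(n-1)/2}\, n\,\sqrt{(n-1)!}\,\|\widetilde f_n(\cdot,r,z;t,x)\|_{\cH^{\otimes(n-1)}}.
\]
By \eqref{re:fg1} and the triangle inequality, $n\|\widetilde f_n(\cdot,r,z;t,x)\|\le\sum_{j=1}^n\|h_j^{(n)}\|\le\sum_j\|f_j^{(n)}\|$, since symmetrization is a contraction. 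Every kernel is supported in $[0,T]^{n-1}$, so Proposition~\ref{lem:full-schur} lets us replace $\cH^{\otimes(n-1)}$-norms by $C_T^{(n-1)/2}$ times $\cX^{\otimes(n-1)}$-norms.

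\textbf{Step 2: factorization.} By \eqref{decom:fj}, the function $f_j^{(n)}$ is the tensor product of $f_{j-1}(\cdot;r,z)$, which lives on times in $(0,r)$, and $g_{n-j}(\cdot,r,z;t,x)$, which lives on times in $(r,t)$. The two factors depend on disjoint blocks of variables and the $\cX$ inner product is a product over coordinates, so the norm splits:
\[
\|f_j^{(n)}\|_{\cX^{\otimes(n-1)}}=\|f_{j-1}(\cdot;r,z)\|_{\cX^{\otimes(j-1)}}\,\|g_{n-j}\|_{\cX^{\otimes(n-j)}}.
\]
Since $f_{j-1}$ is supported on ordered times, its norm is $\sqrt{(j-1)!}\,\|\widetilde f_{j-1}\|$. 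Lemmas~\ref{lem:full-f-estimate} and \ref{lem:full-g-estimate} then give
\[
\|f_j^{(n)}\|_{\cX}\le p_{t-r}(x-z)\,c^{n/2}\sqrt{\textstyle\sum_\ell \frac{(TD_N)^\ell}{\ell!}(4C_N)^{j-1-\ell}}\;\sqrt{\textstyle\sum_\ell\frac{(TD_N)^\ell}{\ell!}(4C_N)^{n-j-\ell}}.
\]
The factor $1/(j-1)!$ from Lemma~\ref{lem:full-f-estimate} cancels exactly against the $(j-1)!$ coming from the ordered support.

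\textbf{Step 3: summability (the main obstacle).} Combined with the $\sqrt{(n-1)!}$ from hypercontractivity, no factorial decay survives directly, so convergence must come from the choice of $N$. Bound each inner sum by $\sum_{\ell=0}^m \frac{a^\ell}{\ell!}b^{m-\ell}\le b^m e^{a/b}$, with $a=TD_N$ and $b=4C_N$.

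The remaining $\sqrt{(n-1)!}$ is the delicate point. The goal is to show that the total is bounded by
\[
p_{t-r}(x-z)\sum_n \big[(p-1)C_Tc\cdot 4C_N\big]^{(n-1)/2}\sqrt{(n-1)!}\times(\text{stuff}),
\]
and this would not converge as written. So the estimate should instead be run at the level of the symmetrized $\widetilde f_n$ in $L^2$ first, writing $n^2(n-1)!\|\widetilde f_n\|^2=n!\,n\|\widetilde f_n\|^2$. One must then exploit that the $h_j^{(n)}$ have disjoint supports in the relative position of $r$, so that $\|\sum_j h_j\|^2$ behaves like $\sum_j\|h_j\|^2$ up to the Schur constant. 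Each term then carries the factor $1/((j-1)!(n-j)!)$ that is implicit in the ordered-simplex structure of $g_{n-j}$, giving
\[
\sum_{j}\binom{n-1}{j-1}^{-1}\cdot(\cdots)\lesssim (c')^n\, e^{2TD_N/(4C_N)}(4C_N)^{n-1}.
\]
Finally, fix $N$ large (Lemma~\ref{lem:full-DN-CN}) so that $(p-1)C_Tc'\cdot4C_N<1/2$. The geometric series then converges and yields \eqref{eq:full-first-derivative-bound} with $C_{T,p}$ depending on $N(T,p)$.

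\textbf{Step 4: identification of the limit.} The same bound with $p=2$, integrated in $(r,z)$ using $\int p_{t-r}^2$-type $\cH$-norm estimates (again via Proposition~\ref{lem:full-schur}), gives $\sum_n n\,n!\|\widetilde f_n\|^2_{\cH^{\otimes n}}<\infty$. This is the standard chaos criterion for $u(t,x)\in\mathbb D^{1,2}$, and it identifies the series as $D_{r,z}u(t,x)$.
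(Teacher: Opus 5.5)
\textbf{Verdict: there is a gap at the one step that makes the series converge.} Your architecture matches the paper's: the Schur embedding of $\cH^{\otimes(n-1)}$ into $\cX^{\otimes(n-1)}$, the factorization \eqref{decom:fj}, Lemmas~\ref{lem:full-f-estimate} and~\ref{lem:full-g-estimate}, the bound $\sum_\ell \frac{a^\ell}{\ell!}b^{m-\ell}\le b^m e^{a/b}$, large $N$, and hypercontractivity. Your Step~2 bound on $\|f_j^{(n)}\|_{\cX^{\otimes(n-1)}}$ is also correct. What is missing is the cancellation of the $(n-1)!$ produced by the isometry and hypercontractivity.

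In Step~1 you use that symmetrization is a contraction, $\|h_j^{(n)}\|\le\|f_j^{(n)}\|$. This discards exactly the factor you need, so Steps~1--2 give a divergent series, as you notice yourself. Step~3 then asserts a repair rather than deriving it:
\begin{itemize}
\item The disjointness you invoke, that of the $h_j^{(n)}$ for different $j$, only improves $(\sum_j\|h_j^{(n)}\|)^2$ to $\sum_j\|h_j^{(n)}\|^2$. That saves a factor $n$, not a factorial.
\item The weights $1/((j-1)!(n-j)!)$ and $\binom{n-1}{j-1}^{-1}$ are stated without any identity relating $\|h_j^{(n)}\|_{\cX^{\otimes(n-1)}}$ to $\|f_{j-1}\|\,\|g_{n-j}\|$. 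As written, convergence is claimed, not proved.
\end{itemize}

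\textbf{The missing identity.} The kernel $f_j^{(n)}(\cdot,r,z;t,x)$ is supported in the single ordered region $\{s_1<\cdots<s_{j-1}<r<s_j<\cdots<s_{n-1}\}$. Hence its $(n-1)!$ permuted copies have pairwise disjoint supports in the time variables. The $\cX^{\otimes(n-1)}$ inner product is an $L^2$ pairing in time (the Fourier transform acts only in space), so these copies are mutually orthogonal, and
\[
\|h_j^{(n)}\|_{\cX^{\otimes(n-1)}}^2=\frac{1}{(n-1)!}\,\|f_j^{(n)}\|_{\cX^{\otimes(n-1)}}^2 .
\]
This must be applied after Proposition~\ref{lem:full-schur}. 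In $\cH^{\otimes(n-1)}$ the time kernel $e^{-|r-s||\xi|^2/2}$ is nonlocal and the copies are not orthogonal. With this identity even your triangle inequality is harmless:
\[
\sqrt{(n-1)!}\,\Big\|\sum_{j=1}^n h_j^{(n)}\Big\|_{\cH^{\otimes(n-1)}}\le C_T^{(n-1)/2}\sum_{j=1}^n\|f_j^{(n)}\|_{\cX^{\otimes(n-1)}} .
\]
Your Step~2 bound then makes the $p$-th moment series $n$ times a geometric series with ratio $\sqrt{4c(p-1)C_TC_N}$, which converges once $N$ is large. This is exactly the paper's computation; the paper also uses orthogonality across $j$, which only saves the harmless factor $n$.

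\textbf{On Step~4.} For $u(t,x)\in\mathbb D^{1,2}$ the paper combines Lemma~\ref{lem:full-f-estimate} directly with Proposition~\ref{lem:full-schur} to get $\sum_n n\,n!\|\widetilde f_n\|_{\cH^{\otimes n}}^2\le e^{TD_N/(4C_N)}\sum_n n(4cC_TC_N)^n<\infty$. Your route through the pointwise bound also works, but it needs two things. First, the Cauchy--Schwarz-over-chaoses argument used in the proof of part~(ii). Second, the $\cH$-norm of $(r,z)\mapsto p_{t-r}(x-z)$, not an $L^2$-type quantity such as $\int p_{t-r}^2$, which diverges for $d\ge 2$.
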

\begin{proof}
 We first prove the $L^2(\Omega)$-convergence of the series in
\eqref{chaos for Du}. By the orthogonality of Wiener chaoses and \eqref{re:fg1}, 
	\begin{align}\label{first-order-chaos}
	\sum_{n\geq 1}\E\Big|n I_{n-1}(\tilde f_n(\cdot,r,z;t,x))\big|^2&=\sum_{n\ge1}\E\Big|I_{n-1}\Big(\sum_{j=1}^nh_j^{(n)}
	(\cdot,r,z;t,x)\Big)\Big|^2\notag \\&=\sum_{n\geq 1}(n-1)!\big\|\sum_{j=1}^n h_j^{(n)}(\cdot,r,z;t,x)\big\|_{\mathcal H^{\otimes(n-1)}}^2\\
	&\leq\sum_{n\geq 1}C_T^{n-1}(n-1)!\big\|\sum_{j=1}^n h_j^{(n)}(\cdot,r,z;t,x)\big\|_{\mathcal X^{\otimes(n-1)}}^2,\notag
		\end{align}
	where the last inequality follows from  \eqref{eq:full-tensor-embedding}.
Since $h_j^{(n)}$ and $h_k^{(n)}$
have disjoint time supports for $j\ne k$, we have \begin{align*}(n-1)!\big\|\sum_{j=1}^nh_j^{(n)}(\cdot,r,z;t,x)\big\|_{\cX^{\otimes(n-1)}}^2=(n-1)!\sum_{j=1}^n\big\|h_j^{(n)}(\cdot,r,z;t,x)\big\|_{\cX^{\otimes(n-1)}}^2.
\end{align*}
Moreover, for each fixed $j$, $h_j^{(n)}$ is the symmetrization of
$f_j^{(n)}$ with respect to its first $n-1$ variables. Hence, 
\begin{align*}
&\big\|h_j^{(n)}(\cdot,r,z;t,x)\big\|_{\cX^{\otimes(n-1)}}^2
\\&=
\frac{1}{((n-1)!)^2}
\sum_{\sigma\in S_{n-1}}
\Big\|
f_j^{(n)}
(s_{\sigma(1)},y_{\sigma(1)},\ldots,
s_{\sigma(n-1)},y_{\sigma(n-1)},r,z;t,x)
\Big\|_{\cX^{\otimes(n-1)}}^2\\
&=
\frac{1}{(n-1)!}
\big\|
f_j^{(n)}(\cdot,r,z;t,x)
\big\|_{\cX^{\otimes(n-1)}}^2\\&=\frac{1}{(n-1)!}\| f_{j-1}(\cdot;r,z)\|_{\cX^{\otimes(j-1)}}^2
\|g_{n-j}(\cdot,r,z;t,x)\|_{\cX^{\otimes(n-j)}}^2,
\end{align*}
where the last equality follows from \eqref{decom:fj}. Similarly, we have
\[\| f_{j-1}(\cdot;r,z)\|_{\cX^{\otimes(j-1)}}^2
=
(j-1)!
\| \widetilde f_{j-1}(\cdot;r,z)\|_{\cX^{\otimes(j-1)}}^2.\]
Consequently,
\begin{align}\label{decom:hj-norm}
&(n-1)!
\bigg\|
\sum_{j=1}^n h_j^{(n)}(\cdot,r,z;t,x)
\bigg\|_{\cX^{\otimes(n-1)}}^2\notag\\
&=
\sum_{j=1}^n
(j-1)!
\|\widetilde f_{j-1}(\cdot;r,z)\|_{\cX^{\otimes(j-1)}}^2
\|g_{n-j}(\cdot,r,z;t,x)\|_{\cX^{\otimes(n-j)}}^2.
\end{align}
By Lemma~\ref{lem:full-f-estimate} and Lemma~\ref{lem:full-g-estimate}, for $1\leq j\leq n$,
\begin{align*}
&(j-1)!
\|\widetilde f_{j-1}(\cdot;r,z)\|_{\cX^{\otimes(j-1)}}^2
\|g_{n-j}(\cdot,r,z;t,x)\|_{\cX^{\otimes(n-j)}}^2\\
&\le c^np_{t-r}^2(x-z)
\sum_{\ell_1=0}^{j-1}\sum_{\ell_2=0}^{n-j}
\frac{(rD_N)^{\ell_1}}{\ell_1!}
\frac{((t-r)D_N)^{\ell_2}}{\ell_2!}\times(4C_N)^{n-1-\ell_1-\ell_2}\\
&\le c^np_{t-r}^2(x-z)(4C_N)^{n-1}
\exp\left(\frac{rD_N}{4C_N}\right)
\exp\left(\frac{(t-r)D_N}{4C_N}\right)\\
&\le c^np_{t-r}^2(x-z)(4C_N)^{n-1}
\exp\left(\frac{TD_N}{4C_N}\right).
\end{align*}
Since $n\le2^n$, summation in $j$ yields
\begin{equation*}\label{eq:full-first-X-bound}
(n-1)!\bigg\|\sum_{j=1}^nh_j^{(n)}(\cdot,r,z;t,x)
\bigg\|_{\cX^{\otimes(n-1)}}^2
\le (2c)^n p_{t-r}^2(x-z)(4C_N)^{n-1}
\exp\left(\frac{TD_N}{4C_N}\right).
\end{equation*}
Since $C_N\to0$, for  fixed $T$ we can choose sufficiently large $N$ such that $8cC_TC_N<1$. Then,
\begin{align}\label{estmate-hj}
&\sum_{n\ge1}C_T^{n-1}(n-1)!
\bigg\|\sum_{j=1}^nh_j^{(n)}(\cdot,r,z;t,x)
\bigg\|_{\cX^{\otimes(n-1)}}^2\notag\\&\le
2c\exp\left(\frac{TD_N}{4C_N}\right)p_{t-r}^2(x-z)
\sum_{n\ge1}\left(8cC_TC_N\right)^{n-1}<\infty.
\end{align}
Therefore, by \eqref{first-order-chaos} and \eqref{estmate-hj}, the series in
\eqref{chaos for Du} converges in $L^2(\Omega)$. 
Moreover, by a similar argument, we use
\eqref{eq:full-tensor-embedding} and Lemma~\ref{lem:full-f-estimate} and  obtain
\begin{align*}
\sum_{n\ge1}n\,n!
\|\widetilde f_n(\cdot;t,x)\|_{\cH^{\otimes n}}^2
&\le
\exp\left(\frac{TD_N}{4C_N}\right)
\sum_{n\ge1}n\bigl(4cC_TC_N\bigr)^n
<\infty.
\end{align*}
Hence, it follows from  \cite[Proposition 1.2.2]{Nualart} that $u(t,x)\in\mathbb D^{1,2}$. By the standard chaos representation of the Malliavin derivative,
together with the pointwise $L^2(\Omega)$-convergence established above,
\eqref{chaos for Du} provides a pointwise version of $Du(t,x)$. In particular,
\begin{align*}
\E|D_{r,z}u(t,x)|^2
\le C_Tp_{t-r}^2(x-z).
\end{align*}

For every $p\ge2$, the hypercontractivity of Wiener chaoses and
\eqref{eq:full-first-X-bound} give
\begin{align*}
\bigg\|I_{n-1}\Big(\sum_{j=1}^nh_j^{(n)}
(\cdot,r,z;t,x)\Big)\bigg\|_p
&\le(p-1)^{(n-1)/2}
\bigg[(n-1)!
\Big\|\sum_{j=1}^nh_j^{(n)}(\cdot,r,z;t,x)
\Big\|_{\cH^{\otimes(n-1)}}^2\bigg]^{1/2}\\
&\le
\sqrt{2c}\exp\left(\frac{TD_N}{8C_N}\right)
p_{t-r}(x-z)
\big(8c(p-1)C_TC_N\big)^{(n-1)/2}.
\end{align*}
Since $C_N\to0$, we may increase $N=N(T,p,d)$ so that
$8c(p-1)C_TC_N<1$.  Hence
\begin{align*}
\|D_{r,z}u(t,x)\|_p
&\le\sum_{n\ge1}
\bigg\|I_{n-1}\Big(\sum_{j=1}^nh_j^{(n)}
(\cdot,r,z;t,x)\Big)\bigg\|_p\\
&\le \sqrt{2c}\exp\left(\frac{TD_N}{8C_N}\right)
p_{t-r}(x-z)
\sum_{n\ge1}
\left(8c(p-1)C_TC_N\right)^{(n-1)/2}\\
&\le C_{T,p}p_{t-r}(x-z).
\end{align*}
\end{proof}

\subsection{Second-order Malliavin derivative estimate}
\label{subsec:full-second-malliavin-derivative}
We now turn to the second Malliavin derivative. Fix
$0<\theta,r<t$ with $\theta\neq r$ and $w,z\in\R^d$. We shall prove $u(t,x)\in\mathbb D^{2,2}$ and 
\begin{equation}\label{eq:chaos of D2}
	D^2_{(\theta,w),(r,z)}u(t,x)
	=
	\sum_{n\ge2}
	n(n-1)I_{n-2}
	\big(
	\widetilde f_n(\cdot,\theta,w,r,z;t,x)
	\big),
\end{equation}
where the series on the right-hand side converges in $L^2(\Omega)$.

For every $n\geq2$, the kernel
$\widetilde f_n(\cdot,\theta,w,r,z;t,x)$
is obtained by fixing two arguments of the symmetric kernel
$\widetilde f_n(\cdot;t,x)$. More precisely,
\begin{align*}
	&\widetilde f_n
	(t_1,x_1,\ldots,t_{n-2},x_{n-2},\theta,w,r,z;t,x)=
	\frac{1}{n(n-1)}
	\sum_{\substack{1\leq i,j\leq n\\i\neq j}}
	h_{i,j}^{(n)}
	(t_1,x_1,\ldots,t_{n-2},x_{n-2},\theta,w,r,z;t,x),
\end{align*}
where $h_{i,j}^{(n)}$ is the symmetrization, with respect to the
remaining $n-2$ space-time variables, of the kernel obtained by placing
$(\theta,w)$ in the $i$-th position and $(r,z)$ in the $j$-th position
of $f_n$. Suppose first that $0<\theta<r<t$. In this case, only the terms with
$i<j$ are nonzero, and hence
\begin{align}\label{relationsip f-g}
	\widetilde f_n(\cdot,\theta,w,r,z;t,x)
	=
	\frac{1}{n(n-1)}
	\sum_{1\leq i<j\leq n}
	h_{i,j}^{(n)}(\cdot,\theta,w,r,z;t,x).
\end{align}
For $1\leq i<j\leq n$, the kernel $h_{i,j}^{(n)}$ is the
symmetrization of
\begin{align*}
&f_{i,j}^{(n)}
(t_1,x_1,\ldots,t_{n-2},x_{n-2},\theta,w,r,z;t,x)\\
&\quad:=
f_n(t_1,x_1,\ldots,t_{i-1},x_{i-1},
\theta,w,
t_i,x_i,\ldots,t_{j-2},x_{j-2},
r,z,
t_{j-1},x_{j-1},\ldots,t_{n-2},x_{n-2};t,x).
\end{align*}
By the definition of $f_n$, we have
\begin{align*}
f_{i,j}^{(n)}
=
f_{i-1}(\cdot;\theta,w)\,
g_{j-i-1}(\cdot,\theta,w;r,z)\,
g_{n-j}(\cdot,r,z;t,x).
\end{align*}

Since the kernels $\widetilde f_n$ are symmetric in their space--time
arguments, it is enough to consider the case $0<\theta<r<t$. The case
$0<r<\theta<t$ follows by interchanging $(\theta,w)$ and $(r,z)$.
\begin{proposition}\label{prop:full-second-malliavin-derivative}
	For every $t>0$ and $x\in\R^d$, we have
$u(t,x)\in\mathbb D^{2,2}$, and the series in \eqref{eq:chaos of D2} converges in $L^2(\Omega)$. Moreover, for  every $T>0$ and $p\geq2$,
	 there exists a constant $C_{T,p}>0$ such that for $0<\theta<r<t\leq T$, $x,w,z\in\R^d$
\begin{equation}\label{eq:full-second-derivative-bound}
	\|D^2_{(\theta,w),(r,z)}u(t,x)\|_p
	\le C_{T,p}p_{r-\theta}(z-w)p_{t-r}(x-z).
\end{equation}
\end{proposition}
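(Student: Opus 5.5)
We mirror the proof of Proposition~\ref{prop:full-first-malliavin-derivative}, now with three factors in the kernel decomposition instead of two. Fix $0<\theta<r<t\le T$. By orthogonality of Wiener chaoses and \eqref{relationsip f-g},
\[
\E\big|n(n-1)I_{n-2}(\widetilde f_n(\cdot,\theta,w,r,z;t,x))\big|^2=(n-2)!\Big\|\sum_{i<j}h_{i,j}^{(n)}\Big\|_{\cH^{\otimes(n-2)}}^2\le C_T^{n-2}(n-2)!\Big\|\sum_{i<j}h_{i,j}^{(n)}\Big\|_{\cX^{\otimes(n-2)}}^2,
\]
where the inequality is \eqref{eq:full-tensor-embedding}. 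For distinct pairs $(i,j)$, the kernels $f_{i,j}^{(n)}$ live on disjoint ordered time regions: the numbers of free times below $\theta$, between $\theta$ and $r$, and above $r$ differ. So the symmetrized kernels $h_{i,j}^{(n)}$ are orthogonal in $\cX^{\otimes(n-2)}$. As in \eqref{decom:hj-norm}, this gives
\[
(n-2)!\Big\|\sum_{i<j}h_{i,j}^{(n)}\Big\|_{\cX^{\otimes(n-2)}}^2=\sum_{1\le i<j\le n}(i-1)!\|\widetilde f_{i-1}(\cdot;\theta,w)\|^2_{\cX^{\otimes(i-1)}}\,\|g_{j-i-1}(\cdot,\theta,w;r,z)\|^2_{\cX^{\otimes(j-i-1)}}\,\|g_{n-j}(\cdot,r,z;t,x)\|^2_{\cX^{\otimes(n-j)}}.
\]
This uses the product structure $f_{i,j}^{(n)}=f_{i-1}\,g_{j-i-1}\,g_{n-j}$: the three factors depend on disjoint blocks of variables, so the $\cX$-norm factorizes.

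We then apply Lemma~\ref{lem:full-f-estimate} to the first factor and Lemma~\ref{lem:full-g-estimate} to the other two. The $g$ factors contribute $p_{r-\theta}^2(z-w)$ and $p_{t-r}^2(x-z)$ respectively. Bounding each of the three sums over $\ell$ by the exponential trick, and using $\theta+(r-\theta)+(t-r)\le T$, gives the bound
\[
c^{n}\,p_{r-\theta}^2(z-w)\,p_{t-r}^2(x-z)\,(4C_N)^{n-2}\exp\Big(\frac{TD_N}{4C_N}\Big)
\]
for each term. There are at most $n^2\le 4^n$ pairs $(i,j)$, so the $n$-th term is bounded by $(4c)^n(4C_N)^{n-2}e^{TD_N/(4C_N)}p_{r-\theta}^2p_{t-r}^2$. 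By Lemma~\ref{lem:full-DN-CN} we can choose $N$ large so that $16cC_TC_N<1$. The series then converges geometrically, which proves $L^2(\Omega)$-convergence of \eqref{eq:chaos of D2} together with the $L^2$ bound.

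For $u(t,x)\in\mathbb D^{2,2}$, the criterion is $\sum_n n^2\, n!\|\widetilde f_n(\cdot;t,x)\|^2_{\cH^{\otimes n}}<\infty$, with the chaos characterization in \cite[Proposition 1.2.2]{Nualart}. This follows from \eqref{eq:full-tensor-embedding} and Lemma~\ref{lem:full-f-estimate}, exactly as in the first-order case, with polynomial factors $n^2$ absorbed into the geometric series. The standard chaos formula for $D^2$ then identifies \eqref{eq:chaos of D2} as a pointwise version of $D^2u(t,x)$. For $p\ge2$, hypercontractivity gives
\[
\|I_{n-2}(\cdot)\|_p\le(p-1)^{(n-2)/2}\|I_{n-2}(\cdot)\|_2 .
\]
Enlarging $N=N(T,p,d)$ so that $16c(p-1)C_TC_N<1$ and summing the resulting square roots via Minkowski's inequality yields \eqref{eq:full-second-derivative-bound}. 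The case $r<\theta$ follows by symmetry.

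The main obstacle is the factorization step. We must check carefully that $f_{i,j}^{(n)}$ splits as the product of $f_{i-1}(\cdot;\theta,w)$, $g_{j-i-1}(\cdot,\theta,w;r,z)$ and $g_{n-j}(\cdot,r,z;t,x)$, with the boundary conventions $g_0$ and $f_0$, including the case $j=i+1$ where $g_0(\cdot,\theta,w;r,z)=p_{r-\theta}(z-w)$. We must also check that symmetrization over the $n-2$ free variables contributes exactly the factor $1/(n-2)!$, which the multinomial split $(i-1)!(j-i-1)!(n-j)!$ then absorbs via the definitions of $\widetilde f$ and the norms of the $g$ kernels. Once this bookkeeping is right, the remaining estimates are a direct repetition of the first-order argument.
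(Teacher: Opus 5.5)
Your proposal is correct and follows the paper's proof essentially step for step. You pass to $\cX^{\otimes(n-2)}$-norms via Proposition~\ref{lem:full-schur}, use the orthogonality of the $h_{i,j}^{(n)}$ coming from their disjoint time supports, factorize $f_{i,j}^{(n)}=f_{i-1}\,g_{j-i-1}\,g_{n-j}$ and apply Lemmas~\ref{lem:full-f-estimate} and~\ref{lem:full-g-estimate} with the exponential bound, take $N$ large, and finish with hypercontractivity. The differences are cosmetic: you count pairs by $n^2\le 4^n$ instead of $\binom{n}{2}\le 2^n$, so you need $16cC_TC_N<1$ rather than $8cC_TC_N<1$, and the $(j-i-1)!(n-j)!$ bookkeeping you flag is unnecessary because Lemma~\ref{lem:full-g-estimate} already bounds the unsymmetrized $g$ kernels.
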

\begin{proof}
		By the isometry of multiple Wiener integrals, \eqref{relationsip f-g} and \eqref{eq:full-tensor-embedding},
		\begin{align}\label{second-order-chaos}
            &\sum_{n\ge2}\E
			\Big| n(n-1)
			I_{n-2}
			\big(
			\widetilde f_n(\cdot,\theta,w,r,z;t,x)
			\big)
			\Big|^2\notag\\
			&=\sum_{n\ge2}
			(n-2)!
			\Big\|
			\sum_{1\leq i<j\leq n}
			h_{i,j}^{(n)}(\cdot,\theta,w,r,z;t,x)
			\Big\|_{\mathcal H^{\otimes(n-2)}}^2\\
			&\leq \sum_{n\ge2}(n-2)!C_T^{n-2}
			\Big\|
			\sum_{1\leq i<j\leq n}
			h_{i,j}^{(n)}(\cdot,\theta,w,r,z;t,x)
			\Big\|_{\mathcal X^{\otimes(n-2)}}^2
			.\notag
		\end{align}
		For each fixed $1\le i<j\le n$, arguing as in the first-order case for
$h_j^{(n)}$, we have
		\begin{align*}
			&(n-2)!
			\Big\|
			\sum_{1\le i<j\le n}
			h_{i,j}^{(n)}(\cdot,\theta,w,r,z;t,x)
			\Big\|_{\cX^{\otimes(n-2)}}^2
			\\
			&\quad=
			\sum_{1\le i<j\le n}
			(i-1)!
			\|\widetilde f_{i-1}(\cdot;\theta,w)\|_{\cX^{\otimes(i-1)}}^2
			\|g_{j-i-1}(\cdot,\theta,w;r,z)\|_{\cX^{\otimes(j-i-1)}}^2
			\|g_{n-j}(\cdot,r,z;t,x)\|_{\cX^{\otimes(n-j)}}^2.
		\end{align*}
		By Lemma~\ref{lem:full-f-estimate} and Lemma~\ref{lem:full-g-estimate}, {for all $1 \le i < j \le n$}
		\begin{align*}
			&(i-1)!
			\|\widetilde f_{i-1}(\cdot;\theta,w)\|_{\cX^{\otimes(i-1)}}^2
			\|g_{j-i-1}(\cdot,\theta,w;r,z)\|_{\cX^{\otimes(j-i-1)}}^2
			\|g_{n-j}(\cdot,r,z;t,x)\|_{\cX^{\otimes(n-j)}}^2
			\\
			&\quad\le
			c^n p_{r-\theta}^2(z-w)p_{t-r}^2(x-z)
			\\
			&\qquad\times
			\sum_{\ell_1=0}^{i-1}
			\sum_{\ell_2=0}^{j-i-1}
			\sum_{\ell_3=0}^{n-j}
			\frac{(\theta D_N)^{\ell_1}}{\ell_1!}
			\frac{((r-\theta)D_N)^{\ell_2}}{\ell_2!}
			\frac{((t-r)D_N)^{\ell_3}}{\ell_3!}
			(4C_N)^{n-2-\ell_1-\ell_2-\ell_3}
			\\
			&\quad=
			c^n p_{r-\theta}^2(z-w)p_{t-r}^2(x-z)
			(4C_N)^{n-2}
			\\
			&\qquad\times
			\sum_{\ell_1=0}^{i-1}
			\frac{1}{\ell_1!}
			\left(\frac{\theta D_N}{4C_N}\right)^{\ell_1}
			\sum_{\ell_2=0}^{j-i-1}
			\frac{1}{\ell_2!}
			\left(\frac{(r-\theta)D_N}{4C_N}\right)^{\ell_2}
			\sum_{\ell_3=0}^{n-j}
			\frac{1}{\ell_3!}
			\left(\frac{(t-r)D_N}{4C_N}\right)^{\ell_3}
			\\
			&\quad\le
			c^n p_{r-\theta}^2(z-w)p_{t-r}^2(x-z)
			(4C_N)^{n-2}
			\exp\left(\frac{tD_N}{4C_N}\right)
			\\
			&\quad\le
			c^n p_{r-\theta}^2(z-w)p_{t-r}^2(x-z)
			(4C_N)^{n-2}
			\exp\left(\frac{TD_N}{4C_N}\right).
		\end{align*}
		
		Since
		$\binom{n}{2}\le 2^n,$
		we obtain 
\begin{equation}\label{eq:full-second-X-bound}
			\begin{aligned}
            &(n-2)!
			\Big\|
			\sum_{1\le i<j\le n}
			h_{i,j}^{(n)}(\cdot,\theta,w,r,z;t,x)
			\Big\|_{\cH^{\otimes(n-2)}}^2\\
				&\leq C_T^{n-2}(n-2)!
				\Big\|
				\sum_{1\le i<j\le n}
				h_{i,j}^{(n)}(\cdot,\theta,w,r,z;t,x)
				\Big\|_{\cX^{\otimes(n-2)}}^2\\&
				\le
				(2c)^nC_T^{n-2}
				p_{r-\theta}^2(z-w)p_{t-r}^2(x-z)
				(4C_N)^{n-2}
				\exp\left(\frac{TD_N}{4C_N}\right).
			\end{aligned}
		\end{equation}
         By  \eqref{second-order-chaos} and 
		\eqref{eq:full-second-X-bound}, 
        \begin{align*}
        &\sum_{n\ge2}\E
			\Big| n(n-1)
			I_{n-2}
			\big(
			\widetilde f_n(\cdot,\theta,w,r,z;t,x)
			\big)
			\Big|^2\\&\leq \sum_{n\ge 2}(2c)^nC_T^{n-2}
				p_{r-\theta}^2(z-w)p_{t-r}^2(x-z)
				(4C_N)^{n-2}
				\exp\left(\frac{TD_N}{4C_N}\right).
        \end{align*}
        Since $C_N\to0$, we may choose $N=N(T,d)$ sufficiently large so that
$8cC_TC_N<1$. Therefore,
\begin{align*}
	\sum_{n\ge2}
	(2c)^nC_T^{n-2}(4C_N)^{n-2}
	=
	4c^2\sum_{n\ge2}
	(8cC_TC_N)^{n-2}=
	\frac{4c^2}{1-8cC_TC_N}
	<\infty.
\end{align*}
Consequently, the series in \eqref{eq:chaos of D2} converges in
$L^2(\Omega)$. Moreover, by \eqref{eq:full-tensor-embedding} and
Lemma~\ref{lem:full-f-estimate},
\begin{align*}
\sum_{n\ge2}n(n-1)n!\,
\|\widetilde f_n(\cdot;t,x)\|_{\cH^{\otimes n}}^2<\infty.
\end{align*}
Hence, $u(t,x)\in\mathbb D^{2,2}$, and \eqref{eq:chaos of D2} gives a version of its 
second Malliavin derivative. In particular, there exists a constant $C_{T,2}>0$ such that
\begin{align*}
\E\Big|D^2_{(\theta,w),(r,z)}u(t,x)\Big|^2
\le
C_Tp_{r-\theta}^2(z-w)p_{t-r}^2(x-z).
\end{align*}

     For every $p\ge2$, by hypercontractivity, we have
			\begin{align*}
				&\Big\|
				I_{n-2}\Big(
				\sum_{1\le i<j\le n}
				h_{i,j}^{(n)}(\cdot,\theta,w,r,z;t,x)
				\Big)
				\Big\|_p
				\\
				&\quad\le
				(p-1)^{\frac{n-2}{2}}
				\Big[
				(n-2)!
				\Big\|
				\sum_{1\le i<j\le n}
				h_{i,j}^{(n)}(\cdot,\theta,w,r,z;t,x)
				\Big\|_{\cH^{\otimes(n-2)}}^2
				\Big]^{1/2}
				\\
				&\quad\le
				2c\exp\left(\frac{TD_N}{8C_N}\right)
				\left[
				8c(p-1)C_TC_N
				\right]^{\frac{n-2}{2}}
				p_{r-\theta}(z-w)p_{t-r}(x-z).
			\end{align*}
			Since $C_N\to0$, we may also choose $N$ sufficiently large such that
			$8c(p-1)C_TC_N<1.$
			It follows that
			\begin{align*}
				\|D^2_{(\theta,w),(r,z)}u(t,x)\|_p
				&\le
				\sum_{n\ge2}
				\Big\|
				I_{n-2}\Big(
				\sum_{1\le i<j\le n}
				h_{i,j}^{(n)}(\cdot,\theta,w,r,z;t,x)
				\Big)
				\Big\|_p
				\\
				&\le
				C_{T,p}
				p_{r-\theta}(z-w)p_{t-r}(x-z).
			\end{align*}
	\end{proof}
    \section{Proofs of the main results}\label{sec:main-results}
In this section, we prove the four assertions of Theorem~\ref{thm:main} in their stated
order.

\subsection{Spatial stationarity and ergodicity}
\label{subsec:spatial-ergodicity}
We now prove assertion~\textup{(i)}. 

\begin{proof}[Proof of Theorem~\ref{thm:main}\textup{(i)}]
\emph{Strict stationarity.}
It suffices to show that, for every $m\geq1$ and
$x_1,\ldots,x_m,h\in\mathbb R^d$,
\begin{align*}
	\big(u(t,x_1),\ldots,u(t,x_m)\big)
	\overset{d}{=}
	\big(u(t,x_1+h),\ldots,u(t,x_m+h)\big).
\end{align*}
For $\varphi\in\mathcal H$, define
\begin{align*}
W^h(\varphi):=W(\varphi_h),
\qquad
\varphi_h(s,y):=\varphi(s,y-h).
\end{align*}
By the definition of $\cH$-norm in \eqref{eq-H norm},
\begin{align*}
\langle \varphi_h,\psi_h\rangle_{\mathcal H}
&=
\int_{\mathbb R_+^2}\int_{\mathbb R^{2d}}
\varphi(s,x-h)\psi(r,y-h)
p_{|s-r|}(x-y)dxdydsdr
=
\langle\varphi,\psi\rangle_{\mathcal H}.
\end{align*}
Therefore, $W^h$ is an isonormal Gaussian process over
$\mathcal H$ and
$W^h\overset{d}{=}W.$
It follows from the chaos expansion \eqref{eq:full-solution-chaos} that
\begin{align*}
u(t,x_j+h)
={}&1+\sum_{n\geq1}
\int_{\{0<s_1<\cdots<s_n<t\}}
\int_{\mathbb R^{dn}}
p_{t-s_n}(x_j+h-y_n)\\
&\quad\times
\prod_{i=1}^{n-1}
p_{s_{i+1}-s_i}(y_{i+1}-y_i)
W(ds_1,dy_1)\cdots W(ds_n,dy_n).
\end{align*}
Making the change of variables
$y_i'=y_i-h$, $1\leq i\leq n$, we obtain
\begin{align*}
u(t,x_j+h;W)=u(t,x_j;W^h).
\end{align*}
Consequently,
\begin{align*}
\big(u(t,x_1+h;W),\ldots,u(t,x_m+h;W)\big)
&=
\big(u(t,x_1;W^h),\ldots,u(t,x_m;W^h)\big)\\
&\overset{d}{=}
\big(u(t,x_1;W),\ldots,u(t,x_m;W)\big),
\end{align*}
which proves the strict stationarity.
\medskip

\noindent
\emph{Ergodicity.}
Let
$	U_R
	=
	\int_{[-R,R]^d}
	\cos\Big(\sum_{j=1}^k b_ju(t,x+\xi_j)\Big)dx,$
where $k\geq1$,
$b_1,\ldots,b_k\in\mathbb R$, and
$\xi_1,\ldots,\xi_k\in\mathbb R^d$ are arbitrary.
By the chain rule of the Malliavin derivative,
\begin{align*}
D_{r,y}U_R
=
-\int_{[-R,R]^d}
\sin\bigg(\sum_{j=1}^k b_ju(t,x+\xi_j)\bigg)
\sum_{j=1}^k b_jD_{r,y}u(t,x+\xi_j)\,dx.
\end{align*}
By the Gaussian Poincar\'e inequality,
\begin{align*}
\operatorname{Var}(U_R)
\leq
\mathbb E\|DU_R\|_{\mathcal H}^2.
\end{align*}
Using the definition of the norm in $\mathcal H$, the fact that
$D_{r,y}u(t,x)=0$ for $r>t$, and the Cauchy--Schwarz inequality, we
obtain
\begin{align*}
\mathbb E\|DU_R\|_{\mathcal H}^2
\leq{}&
\int_0^t\int_0^t\int_{\mathbb R^{2d}}
\int_{[-R,R]^d}\int_{[-R,R]^d}
\Big\|
\sum_{j=1}^k b_jD_{s,y}u(t,x+\xi_j)
\Big\|_2\\
&\quad\times
\Big\|
\sum_{l=1}^k b_lD_{r,z}u(t,x'+\xi_l)
\Big\|_2
p_{|s-r|}(y-z)
\,dx\,dx'\,dy\,dz\,ds\,dr.
\end{align*}
By the estimate \eqref{eq:full-first-derivative-bound} for the Malliavin derivative,
\begin{align*}
\Big\|
\sum_{j=1}^k b_jD_{s,y}u(t,x+\xi_j)
\Big\|_2
\leq
C\sum_{j=1}^k|b_j|
p_{t-s}(x+\xi_j-y).
\end{align*}
Therefore,
\begin{align*}
\mathbb E\|DU_R\|_{\mathcal H}^2
\leq{}&
C\sum_{j,l=1}^k|b_jb_l|
\int_0^t\int_0^t\int_{\mathbb R^{2d}}
\int_{[-R,R]^d}\int_{[-R,R]^d}\\
&\quad\times
p_{t-s}(x+\xi_j-y)
p_{t-r}(x'+\xi_l-z)
p_{|s-r|}(y-z)
\,dx\,dx'\,dy\,dz\,ds\,dr.
\end{align*}
Since
$\int_{[-R,R]^d}
p_{t-r}(x'+\xi_l-z)\,dx'
\leq1,$
and
$\int_{\mathbb R^d}p_{|s-r|}(y-z)\,dz=1,$ $
\int_{\mathbb R^d}p_{t-s}(x+\xi_j-y)dy=1,$
we have
\begin{align*}
\mathbb E\|DU_R\|_{\mathcal H}^2
\leq
C\int_0^t\int_0^t\int_{[-R,R]^d}dx\,ds\,dr
\leq
C_tR^d.
\end{align*}
It follows that
\begin{align*}
0
\leq
\operatorname{Var}\left(
\frac{1}{R^d}U_R
\right)
\leq
\frac{C_t}{R^d}
\longrightarrow0
\qquad\text{as }R\to\infty.
\end{align*}

The same argument applies when the cosine function is replaced by the
sine function. Therefore, by \cite[Lemma~4.2]{24bz}, the strictly
stationary random field
$\{u(t,x):x\in\mathbb R^d\}$ is ergodic.
\end{proof}

\subsection{Covariance asymptotics}
\label{subsec:covariance-asymptotics}
\begin{proof}[Proof of Theorem~\ref{thm:main}\textup{(ii)}] It follows from the proof of \cite[Theorem 1.3(i)]{BalanYuan22Wave} that
it is enough to prove 
\begin{align}\label{L1 of rho}
	\int_{\R^d}\rho_{t,s}(z)\,dz<\infty.
\end{align}
Indeed, 
\begin{align*}
\frac{\E\big[F_R(t)F_R(s)\big]}{R^d}
&=
\frac{1}{R^d}
\int_{B_R}\int_{B_R}\rho_{t,s}(x-y)dxdy=
\int_{\R^d}
\frac{|B_R\cap(B_R-z)|}{R^d}\rho_{t,s}(z)\,dz.
\end{align*}
For every fixed $z\in\R^d$,
$\frac{|B_R\cap(B_R-z)|}{R^d}
=
\big|B_1\cap(B_1-R^{-1}z)\big|
\longrightarrow
|B_1|=\omega_d$
as $R\to\infty$, while
\begin{align*}
0\le
\frac{|B_R\cap(B_R-z)|}{R^d}
\le \omega_d.
\end{align*}
If we have $\rho_{t,s}\in L^1(\R^d)$, the dominated convergence theorem yields
\begin{align*}
\lim_{R\to\infty}
\frac{1}{R^d}\E\big[F_R(t)F_R(s)\big]
&=
\omega_d\int_{\R^d}\rho_{t,s}(z)dz=K(t,s).
\end{align*}
Next, we prove \eqref{L1 of rho}.
 Fix $d<4$, $T>0$, and
$s,t\in(0,T]$. By the orthogonality of Wiener chaos,
\begin{align*}
\rho_{t,s}(z)
&=
\sum_{n\geq1}
n!\left\langle
\widetilde f_n(\cdot;t,z),
\widetilde f_n(\cdot;s,0)
\right\rangle_{\mathcal H^{\otimes n}}
=
\sum_{n\geq1}\frac{\alpha_n(z;t,s)}{n!},
\end{align*}
where
\begin{align*}
\alpha_n(z;t,s)
:=
(n!)^2
\left\langle
\widetilde f_n(\cdot;t,z),
\widetilde f_n(\cdot;s,0)
\right\rangle_{\mathcal H^{\otimes n}}
\geq0.
\end{align*}
By the definition of the inner product in $\mathcal H$ and
Cauchy--Schwarz
inequality,
\begin{align}\label{eq:es-rho}
0&\leq\rho_{t,s}(z)
\leq
\sum_{n\geq1}\frac{n\alpha_n(z;t,s)}{n!}\notag\\
&=
\int_0^t\int_0^s\int_{\mathbb R^{2d}}
\sum_{n\geq1}n^2(n-1)!
\left\langle
\widetilde f_n(\cdot,r,y;t,z),
\widetilde f_n(\cdot,\theta,w;s,0)
\right\rangle_{\mathcal H^{\otimes(n-1)}}
p_{|r-\theta|}(y-w)dydwd\theta dr\notag\\
&\leq\int_0^t\int_0^s\int_{\mathbb R^{2d}}
\bigg(
\sum_{n\geq1}n^2(n-1)!
\Big\|
\widetilde f_n(\cdot,r,y;t,z)
\Big\|_{\mathcal H^{\otimes(n-1)}}^2
\bigg)^{1/2}\\
&\qquad\quad\times
\bigg(
\sum_{n\geq1}n^2(n-1)!
\Big\|
\widetilde f_n(\cdot,\theta,w;s,0)
\Big\|_{\mathcal H^{\otimes(n-1)}}^2
\bigg)^{1/2}p_{|r-\theta|}(y-w)dydwd\theta dr\notag\\&=\int_0^t\int_0^s\int_{\mathbb R^{2d}}\Big(\E\Big|D_{r,y}u(t,z)\Big|^2\Big)^{\frac 12}\Big(\E\Big|D_{\theta,w}u(s,0)\Big|^2\Big)^{\frac 12}p_{|r-\theta|}(y-w)dydwd\theta dr,\notag
\end{align}
where the last equality follows from \eqref{chaos for Du}.
Then, by Proposition~\ref{prop:full-first-malliavin-derivative}, 
 there exists a constant $C_T>0$ such that
\begin{align*}
0\leq\rho_{t,s}(z)\leq
C_T\int_0^t\int_0^s\int_{\mathbb R^{2d}}
p_{t-r}(z-y)p_{s-\theta}(w)
p_{|r-\theta|}(y-w)dydwd\theta dr.
\end{align*}
Integrating with respect
to $z$ and using the fact that the heat kernel has total mass one,
we obtain
\begin{align*}
\int_{\mathbb R^d}\rho_{t,s}(z)dz
&\leq
C_T\int_0^t\int_0^s\int_{\mathbb R^{3d}}
p_{t-r}(z-y)p_{s-\theta}(w)
p_{|r-\theta|}(y-w)
dzdydwd\theta dr\\
&=
C_Tts<\infty.
\end{align*}
This proves assertion~\textup{(ii)}.
\end{proof}

\subsection{Quantitative central limit theorem}\label{subsec:quantitative-clt}
For a measurable function $f:\R_+\times\R^d\to\R$, define
\begin{align*}
\|f\|_{|\mathcal H|}^2
:=
\int_{\R_+^2}\int_{\R^{2d}}
|f(r,z)|\,|f(s,y)|\,p_{|r-s|}(z-y)
\,dz\,dy\,dr\,ds.
\end{align*}
We denote by $|\mathcal H|$ the space of measurable functions $f$ such that
$\|f\|_{|\mathcal H|}<\infty$. The space $|\mathcal H^{\otimes 2}|$ is defined
analogously, with the product kernel
$p_{|\theta-\theta'|}(w-w')\,p_{|r-r'|}(z-z').$

We first record the second-order Gaussian Poincaré inequality used in the proof later.
The following proposition provides, in our framework, an analogue of
\cite[Proposition 1.8]{bnz22}.

\begin{proposition}\label{second-order-Poincare}
Let $F\in\mathbb D^{2,4}$ be a centered random variable with
$\operatorname{Var}(F)=\sigma^2\in(0,\infty)$. Assume that, almost
surely, $DF\in|\mathcal H|$ and
$D^2F\in|\mathcal H^{\otimes2}|$. Then
\begin{align}
d_{\mathrm{TV}}(F,N)
\leq
\frac{4}{\sigma^2}\sqrt{\mathcal A(F)},
\label{e:second-order-Poincare}
\end{align}
where $N\sim \mathcal N(0,\sigma^2)$ and
\begin{align}\label{def-cA}
\mathcal A(F)
:={}&
\int_{\mathbb R_+^6}\int_{\mathbb R^{6d}}
\left\|
D^2_{(\theta,w),(r,z)}F
\right\|_4
\left\|
D^2_{(\theta',w'),(s,y)}F
\right\|_4
\left\|
D_{r',z'}F
\right\|_4
\left\|
D_{s',y'}F
\right\|_4
\\
&\quad\times p_{|\theta-\theta'|}(w-w')
p_{|r-r'|}(z-z')
p_{|s-s'|}(y-y')
d\theta d\theta' dr dr' ds ds'
dw dw' dz dz'dy dy'.\notag
\end{align}
\end{proposition}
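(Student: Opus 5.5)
The plan is to follow the proof of \cite[Proposition 1.8]{bnz22}, which adapts Vidotto's second-order Poincar\'e inequality to a general Hilbert space $\mathcal H$. The only change is the explicit form of the $\mathcal H$-inner product, here given by the kernel $p_{|t-s|}(x-y)$.

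\textbf{Step 1: Malliavin--Stein bound.} Since $F\in\mathbb D^{1,2}$ is centered, \eqref{e:F-delta-DL} gives $F=\delta(-DL^{-1}F)$. Stein's method for total variation (e.g.\ \cite[Theorem 5.1.3]{Nualart}-type arguments) then yields
\[
d_{\mathrm{TV}}(F,N)\le \frac{2}{\sigma^2}\,\E\big|\sigma^2-\langle DF,-DL^{-1}F\rangle_{\mathcal H}\big|
\le \frac{2}{\sigma^2}\sqrt{\operatorname{Var}\big(\langle DF,-DL^{-1}F\rangle_{\mathcal H}\big)}.
\]
Here I use $\E\langle DF,-DL^{-1}F\rangle_{\mathcal H}=\E[F^2]=\sigma^2$.

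\textbf{Step 2: Poincar\'e inequality.} Put $G=\langle DF,-DL^{-1}F\rangle_{\mathcal H}$ and apply the Gaussian Poincar\'e inequality, $\operatorname{Var}(G)\le \E\|DG\|_{\mathcal H}^2$. By the product rule,
\[
DG=\langle D^2F,-DL^{-1}F\rangle_{\mathcal H}+\langle DF,-D^2L^{-1}F\rangle_{\mathcal H},
\]
where the pairing is in one $\mathcal H$-slot. Hence $\E\|DG\|^2_{\mathcal H}\le 2(\E\|\cdot\|^2+\E\|\cdot\|^2)$.

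\textbf{Step 3: explicit integrals.} Write each term as an integral against the kernels $p$. All kernels are nonnegative, and the hypotheses $DF\in|\mathcal H|$, $D^2F\in|\mathcal H^{\otimes2}|$ make the absolute integrals finite, which justifies Fubini. Then bound the expectations of fourfold products by the generalized H\"older inequality with four $L^4$ norms.

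\textbf{Step 4: removing $L^{-1}$.} We need the Mehler-formula bounds
\[
\|D_{r,z}L^{-1}F\|_4\le\|D_{r,z}F\|_4,\qquad \|D^2L^{-1}F\|_4\le\tfrac12\|D^2F\|_4\ \text{(pointwise)}.
\]
These come from $-DL^{-1}F=\int_0^\infty e^{-t}P_tDF\,dt$ and $-D^2L^{-1}F=\int_0^\infty e^{-2t}P_tD^2F\,dt$, together with the contractivity of $P_t$ on $L^4$, applied pointwise in the space-time variables. This gives two integrals of the form $\mathcal A(F)$, with constants $1$ and $1/4$. Combining,
\[
\operatorname{Var}(G)\le 2\big(1+\tfrac14\big)\mathcal A(F)\le 4\mathcal A(F),
\]
so $d_{\mathrm{TV}}\le \frac{2}{\sigma^2}\cdot 2\sqrt{\mathcal A(F)}$, which is \eqref{e:second-order-Poincare}.

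\textbf{Main obstacle.} The delicate part is making Steps 3--4 rigorous: the pointwise versions of $D^2F$ and the Mehler representation must commute with the $\mathcal H$-pairings. This is where the $|\mathcal H|$ assumptions are needed. I would handle it by first establishing everything for smooth cylindrical $F$ and then passing to the limit in $\mathbb D^{2,4}$.
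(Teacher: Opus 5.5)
Your proposal is correct and follows essentially the same route as the paper: the Malliavin--Stein bound with Cauchy--Schwarz, then the Gaussian Poincar\'e inequality with the product rule, then the Mehler representations $-DL^{-1}F=\int_0^\infty e^{-u}P_uDF\,du$ and $-D^2L^{-1}F=\int_0^\infty e^{-2u}P_uD^2F\,du$, and finally H\"older with four $L^4$ norms. The only cosmetic difference is in the Mehler step: you apply Minkowski and $L^4$-contractivity of $P_u$ pointwise to $DL^{-1}F$ and $D^2L^{-1}F$, getting $\frac52\mathcal A(F)$, whereas the paper applies Jensen in the $u$-integral before H\"older and bounds each term crudely by $\mathcal A(F)$; both give $\operatorname{Var}(\langle DF,-DL^{-1}F\rangle_{\mathcal H})\le 4\mathcal A(F)$.
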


\begin{proof}
By the Malliavin--Stein inequality (see, e.g.,
\cite{NP12}), 
\begin{align}
d_{\mathrm{TV}}(F,N)
\leq
\frac{2}{\sigma^2}
\mathbb E
\left|
\sigma^2-
\left\langle
DF,-DL^{-1}F
\right\rangle_{\mathcal H}
\right|.
\label{e:Malliavin-Stein}
\end{align}
By the duality relation between $D$ and $\delta$, together with \eqref{e:F-delta-DL}, we have
\begin{align*}
\mathbb E
\left[
\left\langle
DF,-DL^{-1}F
\right\rangle_{\mathcal H}
\right]
=
\mathbb E
\left[
F\delta\left(-DL^{-1}F\right)
\right]=
\mathbb E[F^2]
=
\sigma^2.
\end{align*}
Hence, by the Cauchy--Schwarz inequality,
\begin{align}
d_{\mathrm{TV}}(F,N)
\leq
\frac{2}{\sigma^2}
\sqrt{
\operatorname{Var}
\left(
\left\langle
DF,-DL^{-1}F
\right\rangle_{\mathcal H}
\right)
}.
\label{e:TV-variance}
\end{align}
By the Gaussian Poincar\'e inequality (see, e.g.,
\cite{Nualart,NP12}), 
\begin{align}\label{e:variance-decomposition}
&\operatorname{Var}
\left(
\left\langle
DF,-DL^{-1}F
\right\rangle_{\mathcal H}
\right)\leq
\mathbb E
\left[
\left\|
D\left\langle
DF,-DL^{-1}F
\right\rangle_{\mathcal H}
\right\|_{\mathcal H}^2
\right]\notag\\&\quad\leq
2\mathbb E
\left[
\left\|
\left\langle
D^2F,-DL^{-1}F
\right\rangle_{\mathcal H}
\right\|_{\mathcal H}^2
\right]+
2\mathbb E
\left[
\left\|
\left\langle
DF,-D^2L^{-1}F
\right\rangle_{\mathcal H}
\right\|_{\mathcal H}^2
\right].
\end{align}
Using the representation
$-DL^{-1}F
=
\int_0^\infty e^{-u}P_uDFdu,$ with $\{P_u,u\ge 0\}$ is the Ornstein--Uhlenbeck semigroup 
and Jensen's inequality, we have
\begin{align*}
&\mathbb E
\left[
\left\|
\left\langle
D^2F,-DL^{-1}F
\right\rangle_{\mathcal H}
\right\|_{\mathcal H}^2
\right]\leq
\int_0^\infty e^{-u}
\mathbb E
\left[
\left\|
\left\langle
D^2F,P_uDF
\right\rangle_{\mathcal H}
\right\|_{\mathcal H}^2
\right]du.
\end{align*}
Expanding the inner products in $\mathcal H$ yields that
\begin{align*}
&\mathbb E
\left[
\left\|
\left\langle
D^2F,P_uDF
\right\rangle_{\mathcal H}
\right\|_{\mathcal H}^2
\right]
=
\int_{\mathbb R_+^6}\int_{\mathbb R^{6d}}
p_{|\theta-\theta'|}(w-w')
p_{|r-r'|}(z-z')
p_{|s-s'|}(y-y')\\
&\quad\times
\mathbb E\Big[
D^2_{(\theta,w),(r,z)}F
D^2_{(\theta',w'),(s,y)}F
P_u(D_{r',z'}F)
P_u(D_{s',y'}F)
\Big]
d\theta d\theta'drdr'dsds'
dwdw'dzdz'dydy'.
\end{align*}
By H\"older's inequality and the contraction property of $P_u$ in
$L^4(\Omega)$,
\begin{align*}
&\left|
\mathbb E\Big[
D^2_{(\theta,w),(r,z)}F
D^2_{(\theta',w'),(s,y)}F
P_u(D_{r',z'}F)
P_u(D_{s',y'}F)
\Big]
\right|\\
&\leq
\left\|
D^2_{(\theta,w),(r,z)}F
\right\|_4
\left\|
D^2_{(\theta',w'),(s,y)}F
\right\|_4
\left\|
D_{r',z'}F
\right\|_4
\left\|
D_{s',y'}F
\right\|_4.
\end{align*}
It follows that
\begin{align}
\mathbb E
\left[
\left\|
\left\langle
D^2F,-DL^{-1}F
\right\rangle_{\mathcal H}
\right\|_{\mathcal H}^2
\right]
\leq
\mathcal A(F).
\label{e:first-term-A}
\end{align}

Similarly, using
$-D^2L^{-1}F
=
\int_0^\infty e^{-2u}P_uD^2Fdu,$
and repeating the same argument, we obtain
\begin{align}
\mathbb E
\left[
\left\|
\left\langle
DF,-D^2L^{-1}F
\right\rangle_{\mathcal H}
\right\|_{\mathcal H}^2
\right]
\leq
\mathcal A(F).
\label{e:second-term-A}
\end{align}
Combining \eqref{e:variance-decomposition},
\eqref{e:first-term-A}, and \eqref{e:second-term-A}, we get
\begin{align*}
\operatorname{Var}
\left(
\left\langle
DF,-DL^{-1}F
\right\rangle_{\mathcal H}
\right)
\leq
4\mathcal A(F).
\end{align*}
Substituting this estimate into \eqref{e:TV-variance} yields
\begin{align*}
d_{\mathrm{TV}}(F,N)
\leq
\frac{4}{\sigma^2}\sqrt{\mathcal A(F)},
\end{align*}
which proves the result.
\end{proof}

\begin{proof}[Proof of Theorem~\ref{thm:main}\textup{(iii)}] {Fix $t>0$ and $R>0$. By
Propositions~\ref{prop:full-first-malliavin-derivative}
and~\ref{prop:full-second-malliavin-derivative},
together with Minkowski's inequality, we have
$F_R(t)\in\mathbb D^{2,4}$.
Moreover, since $d<4$, we have 
\begin{align*}
&\mathbb E\left[
\big\|\,|DF_R(t)|\,\big\|_{\mathcal H}^2
+
\big\|\,|D^2F_R(t)|\,\big\|_{\mathcal H^{\otimes2}}^2
\right]\leq C_{t,d}|B_R|<\infty.
\end{align*}
 Hence $DF_R(t)\in|\mathcal H|$ and
$D^2F_R(t)\in|\mathcal H^{\otimes2}|$ almost surely.
Finally, $\mathbb E[F_R(t)]=0$ because $\mathbb E[u(t,x)]=1$,
and the first chaos contribution gives $\sigma_R^2(t)>0$.
Thus Proposition~\ref{second-order-Poincare} applies to $F_R(t)$.}

 Applying  Proposition  \ref{second-order-Poincare} with $F=F_R(t)$ yields that
\begin{align*}
d_{TV}\left(\frac{F_R(t)}{\sigma_R(t)},Z\right)=d_{TV}\left(F_R(t),N\right)\leq\frac{4}{\sigma^2_R(t)}\sqrt{\mathcal A(F_R(t))},
\end{align*}
where $ Z\sim \mathcal N(0,1)$ and $N\sim \mathcal N(0,\sigma_R^2(t))$. 

{By assertion~\textup{(ii)},
\[
\frac{\sigma_R^2(t)}{R^d}\longrightarrow K(t,t)
\qquad\text{as }R\to\infty.
\]
Moreover, retaining only the first chaos contribution gives
$K(t,t)\geq \omega_dt^2>0.$
Therefore, for all sufficiently large $R$,
$\sigma_R^2(t)\geq \frac12K(t,t)R^d.$
}
Hence,
we only need to prove
\begin{align}\label{cA}
{\mathcal A(F_R(t))}\leq C_tR^d.
\end{align}
By Minkowski inequality and Proposition \ref{prop:full-second-malliavin-derivative}, we have 
\begin{align*}
&\|D^2_{(\theta,w),(r,z)} F_{R}(t)\|_4=\Big\|\int_{B_R}D^2_{(\theta,w),(r,z)} u(t,x)dx\Big\|_4\leq \int_{B_R}\|D^2_{(\theta,w),(r,z)} u(t,x)\|_4dx\\&\leq C\int_{B_R}\Big(p_{r-\theta}(z-w)p_{t-r}(x-z)\mathbf 1_{\{0<\theta<r<t\}}+p_{\theta-r}(w-z)p_{t-\theta}(x-w)\mathbf 1_{\{0<r<\theta<t\}}\Big)dx.
\end{align*}Similarly, by Proposition \ref{prop:full-first-malliavin-derivative}, we have for $0<r'<t$
\begin{align*}
\|D_{r',z'}F_R(t)\|_4\leq C\int_{B_R}p_{t-r'}(x-z')dx.
\end{align*}It follows that
\begin{align}\label{eq:cA}
\mathcal A(F_R(t))\leq C_t(\mathcal A_1+\mathcal A_2+\mathcal A_3+\mathcal A_4)
\end{align}
with
\begin{align}\label{a1-a4}
\mathcal A_1=&\int_{\R^{6d}}\int_0^t\int_0^t\int_{\{0<\theta<r<t\}}\int_{\{0<\theta'<s<t\}}\int_{B_R^4} p_{r-\theta}(z-w)p_{t-r}(x_1-z)\notag\\&\times p_{s-\theta'}(y-w')p_{t-s}(x_2-y)p_{t-r'}(x_3-z')p_{t-s'}(x_4-y')p_{|\theta-\theta'|}(w-w')\notag\\&\times p_{|r-r'|}(z-z')p_{|s-s'|}(y-y') dx_1  dx_2  dx_3  dx_4  d\theta' ds d\theta dr ds'dr' dw dw' dy dy' dz dz',\notag\\
\mathcal A_2=&\int_{\R^{6d}}\int_0^t\int_0^t\int_{\{0<\theta<r<t\}}\int_{\{0<s<\theta'<t\}}\int_{B_R^4} p_{r-\theta}(z-w)p_{t-r}(x_1-z)\notag\\&\times  p_{\theta'-s}(w'-y)p_{t-\theta'}(x_2-w')p_{t-r'}(x_3-z')p_{t-s'}(x_4-y')p_{|\theta-\theta'|}(w-w')\notag\\&\times p_{|r-r'|}(z-z')p_{|s-s'|}(y-y') dx_1  dx_2  dx_3  dx_4  d\theta' ds d\theta dr ds'dr' dw dw' dy dy' dz dz',\\
\mathcal A_3=&\int_{\R^{6d}}\int_0^t\int_0^t\int_{\{0<r<\theta<t\}}\int_{\{0<\theta'<s<t\}}\int_{B_R^4} p_{\theta-r}(w-z)p_{t-\theta}(x_1-w)\notag\\&\times p_{s-\theta'}(y-w')p_{t-s}(x_2-y)p_{t-r'}(x_3-z')p_{t-s'}(x_4-y')p_{|\theta-\theta'|}(w-w')\notag\\&\times p_{|r-r'|}(z-z')p_{|s-s'|}(y-y') dx_1  dx_2  dx_3  dx_4  d\theta' ds d\theta dr ds'dr' dw dw' dy dy' dz dz',\notag\\
\mathcal A_4=&\int_{\R^{6d}}\int_0^t\int_0^t\int_{\{0<r<\theta<t\}}\int_{\{0<s<\theta'<t\}}\int_{B_R^4} p_{\theta-r}(w-z)p_{t-\theta}(x_1-w)\notag\\&\times p_{\theta'-s}(w'-y)p_{t-\theta'}(x_2-w')p_{t-r'}(x_3-z')p_{t-s'}(x_4-y')p_{|\theta-\theta'|}(w-w')\notag\\&\times p_{|r-r'|}(z-z')p_{|s-s'|}(y-y') dx_1  dx_2  dx_3  dx_4  d\theta' ds d\theta dr ds'dr' dw dw' dy dy' dz dz'.\notag
\end{align}
For the term $\mathcal A_1$, we integrate the space variables in the order $x_4,y',x_3,z',x_2,y,w',w,z$, then
\begin{align*}
\mathcal A_1&\leq \int_0^t\int_0^t\int_{\{0<\theta<r<t\}}\int_{\{0<\theta'<s<t\}}\int_{B_R}dx_1 d\theta' ds d\theta dr ds'dr'\leq t^6\omega_d R^d. 
\end{align*}
Using a similar argument for $\mathcal A_2,\mathcal A_3,\mathcal A_4$, we have by \eqref{eq:cA} that $\mathcal A(F_R(t))\leq C_tR^d$. The proof is complete.
\end{proof}
\subsection{Functional central limit theorem}
\label{subsec:functional-clt}

\begin{proof}[Proof of Theorem~\ref{thm:main}\textup{(iv)}]
Fix $T>0$ and set
\begin{align*}
Q_R(t):=R^{-d/2}F_R(t),\qquad R>0,\quad t\geq0.
\end{align*}
To prove the convergence in $C[0,T]$, it suffices to
establish the
convergence of the finite-dimensional distributions of $Q_R$ to those
of $Q$ and the tightness of $\{Q_R:R>0\}$ in $C[0,T]$.

\medskip
\noindent
{\bf Step 1. Finite-dimensional distributions.}
Fix $m\geq1$, $0\leq t_1<\cdots<t_m\leq T$, and
$a_1,\ldots,a_m\in\mathbb R$. Set
\begin{align*}
S_R:=\sum_{\ell=1}^m a_\ell F_R(t_\ell).
\end{align*}
By assertion~\textup{(ii)}, as $R\to \infty$,
\begin{align}
R^{-d}\operatorname{Var}(S_R)
&=
\sum_{k,\ell=1}^m a_ka_\ell
R^{-d}\mathbb E[F_R(t_k)F_R(t_\ell)]
\longrightarrow
\sum_{k,\ell=1}^m a_ka_\ell K(t_k,t_\ell)
=:v_a^2\geq0.
\label{e:linear-variance}
\end{align}
By the triangle inequality, Minkowski's inequality, and
Proposition~\ref{prop:full-first-malliavin-derivative} with $p=4$,
we obtain
\begin{align*}
\|D_{r',z'}S_R\|_4
&=
\left\|
\sum_{k=1}^m a_k
\int_{B_R}D_{r',z'}u(t_k,x)\,dx
\right\|_4\\
&\leq
\sum_{k=1}^m|a_k|
\left\|
\int_{B_R}D_{r',z'}u(t_k,x)\,dx
\right\|_4\\
&\leq
\sum_{k=1}^m|a_k|
\int_{B_R}\|D_{r',z'}u(t_k,x)\|_4\,dx\\
&\leq
C_T\sum_{k=1}^m|a_k|
\mathbf1_{\{0<r'<t_k\}}
\int_{B_R}p_{t_k-r'}(x-z')\,dx.
\end{align*}
Similarly, we can obtain by Proposition~\ref{prop:full-second-malliavin-derivative},
\begin{align*}
\|D^2_{(\theta,w),(r,z)}S_R\|_4
&\leq
C_T\sum_{k=1}^m|a_k|
\int_{B_R}
\Big[
\mathbf1_{\{0<\theta<r<t_k\}}
p_{r-\theta}(z-w)p_{t_k-r}(x-z)\\
&\hspace{5em}+
\mathbf1_{\{0<r<\theta<t_k\}}
p_{\theta-r}(w-z)p_{t_k-\theta}(x-w)
\Big]dx.
\end{align*}

Recall that $\mathcal A(F)$ is defined in
Proposition~\ref{second-order-Poincare}. Applying the above estimates
to the four Malliavin derivative factors in $\mathcal A(S_R)$ and
expanding the sums, we obtain
\begin{align*}
\mathcal A(S_R)
\leq
C_T^4
\sum_{i,j,k,\ell=1}^m
|a_i a_j a_k a_\ell|
\sum_{q=1}^4\mathcal A_q^{i,j,k,\ell},
\end{align*}
where $\mathcal A_q^{i,j,k,\ell}$, $1\leq q\leq4$, are the analogues of
$\mathcal A_q$ introduced in \eqref{a1-a4}, with
the four terminal times replaced by $t_i,t_j,t_k,t_\ell$, respectively.
For example,
\begin{align*}
\mathcal A_1^{i,j,k,\ell}
={}&
\int_{\R^{6d}}\int_0^{t_k}\int_0^{t_\ell}
\int_{\{0<\theta<r<t_i\}}
\int_{\{0<\theta'<s<t_j\}}
\int_{B_R^4}
p_{r-\theta}(z-w)p_{t_i-r}(x_1-z)
\\
&\quad\times
p_{s-\theta'}(y-w')p_{t_j-s}(x_2-y)
p_{t_k-r'}(x_3-z')p_{t_\ell-s'}(x_4-y')
\\
&\quad\times
p_{|\theta-\theta'|}(w-w')
p_{|r-r'|}(z-z')
p_{|s-s'|}(y-y')
dx_1dx_2dx_3dx_4
\\
&\quad\times
d\theta'dsd\theta drds'dr'
dwdw'dydy'dzdz'.
\end{align*}
We keep the domain of $x_1$ equal to $B_R$
and enlarge the domains of $x_2,x_3,x_4$ from $B_R$ to $\R^d$.
Then integrating successively with respect to
$x_2, x_3,x_4,z', y', y, w', w, z$ yields that
\begin{align*}
\mathcal A_1^{i,j,k,\ell}
&\leq
|B_R|
\int_0^{t_k}dr'
\int_0^{t_\ell}ds'
\int_{\{0<\theta<r<t_i\}}d\theta dr
\int_{\{0<\theta'<s<t_j\}}d\theta'ds
\leq T^6|B_R|.
\end{align*}
The terms $\mathcal A_2^{i,j,k,\ell}$,
$\mathcal A_3^{i,j,k,\ell}$ and $\mathcal A_4^{i,j,k,\ell}$ can be treated in exactly the same way. Hence, we have
\begin{align}
\mathcal A(S_R)
&\leq
4C_T^4T^6|B_R|
\sum_{i,j,k,\ell=1}^m|a_ia_ja_ka_\ell|
=
4C_T^4T^6\omega_dR^d
\left(\sum_{k=1}^m|a_k|\right)^4
\leq C_{a,T,d}R^d.
\label{e:linear-A}
\end{align}

If $v_a^2>0$, then \eqref{e:linear-variance} gives, for all
sufficiently large $R$,
\begin{align*}
\operatorname{Var}(S_R)\geq\tfrac12v_a^2R^d.
\end{align*}
Since $S_R$ is a finite linear combination of $F_R(t_k)$, the assumptions of Proposition~\ref{second-order-Poincare} are satisfied.
By Proposition~\ref{second-order-Poincare}
and \eqref{e:linear-A},
\begin{align}\label{eq:total-variation-S}
d_{\mathrm{TV}}\left(
\frac{S_R}{\sqrt{\operatorname{Var}(S_R)}},Z
\right)
\leq
\frac{4\sqrt{\mathcal A(S_R)}}{\operatorname{Var}(S_R)}
\leq C_{a,T,d}R^{-d/2}
\longrightarrow0,
\end{align}
where $Z\sim\mathcal N(0,1)$. Hence, by \eqref{e:linear-variance}, \eqref{eq:total-variation-S} and  Slutsky's theorem, as $R\to \infty$
\begin{align*}
R^{-d/2}S_R
&=
\left(R^{-d}\operatorname{Var}(S_R)\right)^{1/2}
\frac{S_R}{\sqrt{\operatorname{Var}(S_R)}}
\overset{d}{\longrightarrow}v_aZ.
\end{align*}
 If $v_a^2=0$, then
\begin{align*}
R^{-d/2}S_R
\overset{d}{\longrightarrow}0=v_aZ.
\end{align*}
The Cram\'er--Wold theorem therefore yields
\begin{align*}
(Q_R(t_1),\ldots,Q_R(t_m))
&\overset{d}{\longrightarrow}
(Q(t_1),\ldots,Q(t_m)),
\end{align*}
where $Q$ is a centered Gaussian process, and 
$\mathbb E[Q(t_i)Q(t_j)]
=K(t_i,t_j)$ for $1\leq i,j\leq m.$

\medskip

\noindent
{\bf Step 2. Tightness.}
Fix $T>0$ and $p>2$, and choose
$\frac1p<\alpha<\frac12.$

Let $0\leq s<t\leq T$. By the chaos expansion,
\begin{align*}
F_R(t)-F_R(s)
=
\sum_{n\geq1}
I_n\left(
g_{n,R}^{(1)}(\cdot;t,s)
+
g_{n,R}^{(2)}(\cdot;t,s)
\right),
\end{align*}
where
\begin{align*}
&g_{n,R}^{(1)}
(t_1,x_1,\ldots,t_n,x_n;t,s)\\
&\quad=
\mathbf1_{\{0<t_1<\cdots<t_n<s\}}
\int_{B_R}
\left[
p_{t-t_n}(x-x_n)-p_{s-t_n}(x-x_n)
\right]
\prod_{j=1}^{n-1}
p_{t_{j+1}-t_j}(x_{j+1}-x_j)\,dx,
\end{align*}
and
\begin{align*}
&g_{n,R}^{(2)}
(t_1,x_1,\ldots,t_n,x_n;t,s)\\
&\quad=
\mathbf1_{\{0<t_1<\cdots<t_n<t,\ s<t_n\}}
\int_{B_R}
p_{t-t_n}(x-x_n)
\prod_{j=1}^{n-1}
p_{t_{j+1}-t_j}(x_{j+1}-x_j)\,dx.
\end{align*}
We first estimate $g_{n,R}^{(1)}$. Set
\begin{align*}
\eta_0=0,
\qquad
\eta_j=\xi_1+\cdots+\xi_j,
\qquad 1\leq j\leq n.
\end{align*}
Taking the Fourier transform with respect to the spatial variables
$x_1,\ldots,x_n$, we obtain
\begin{align*}
&\mathcal Fg_{n,R}^{(1)}
(\mathbf t,\boldsymbol\xi;t,s)\\
&\quad=
\mathbf1_{\{0<t_1<\cdots<t_n<s\}}
\mathcal F\mathbf1_{B_R}(\eta_n)
\left[
e^{-\frac12(t-t_n)|\eta_n|^2}
-
e^{-\frac12(s-t_n)|\eta_n|^2}
\right]
\prod_{j=1}^{n-1}
e^{-\frac12(t_{j+1}-t_j)|\eta_j|^2}.
\end{align*}
Since
$\xi_j=\eta_j-\eta_{j-1},\ 
d\boldsymbol\xi=d\boldsymbol\eta,$
it follows that
\begin{align}\label{eq:normX of g1}
&\|g_{n,R}^{(1)}(\cdot;t,s)\|_{\mathcal X^{\otimes n}}^2\notag\\
&=
c_d^n
\int_{\{0<t_1<\cdots<t_n<s\}}
\int_{\mathbb R^{dn}}
|\mathcal F\mathbf1_{B_R}(\eta_n)|^2
\left|
e^{-\frac12(t-t_n)|\eta_n|^2}
-
e^{-\frac12(s-t_n)|\eta_n|^2}
\right|^2\notag\\
&\hspace{6em}\times
\prod_{j=1}^{n-1}
e^{-(t_{j+1}-t_j)|\eta_j|^2}
\prod_{j=1}^n
\frac{1}{1+|\eta_j-\eta_{j-1}|^2}
d\boldsymbol\eta d\mathbf t\notag\\&\leq c_d^n
\int_{\{0<t_1<\cdots<t_n<s\}}
\int_{\mathbb R^{nd}}
|\mathcal F\mathbf1_{B_R}(\eta_n)|^2
\left|
e^{-\frac12(t-t_n)|\eta_n|^2}
-
e^{-\frac12(s-t_n)|\eta_n|^2}
\right|^2\\
&\hspace{6em}\times
\prod_{j=1}^{n-1}
\bigg(e^{-(t_{j+1}-t_j)|\eta_j|^2}
\frac{1}{1+|\eta_j-\eta_{j-1}|^2}\Big)
d\boldsymbol\eta d\mathbf t\notag\\&=c_d^n
\int_{\{0<t_1<\cdots<t_n<s\}}
\int_{\mathbb R^{d}}
|\mathcal F\mathbf1_{B_R}(\eta_n)|^2
\left|
e^{-\frac12(t-t_n)|\eta_n|^2}
-
e^{-\frac12(s-t_n)|\eta_n|^2}
\right|^2d\eta_n\notag\\
&\hspace{6em}\times
\int_{\R^{d(n-1)}}\prod_{j=1}^{n-1}
e^{-(t_{j+1}-t_j)|\eta_j|^2}
\frac{1}{1+|\eta_j-\eta_{j-1}|^2}
d\eta_{n-1}\cdots d\eta_1 d\mathbf t,\notag
\end{align}
For $t_n<s$, using $|1-e^{-x}|\le C_\alpha x^\alpha$ and noting that
$2\alpha<1$, we obtain
\begin{align}\label{eq:es-minus}
\left|
e^{-\frac12(t-t_n)|\eta_n|^2}
-
e^{-\frac12(s-t_n)|\eta_n|^2}
\right|^2
&=
e^{-(s-t_n)|\eta_n|^2}
\left|
1-e^{-\frac12(t-s)|\eta_n|^2}
\right|^2\notag\\
&\leq
C_\alpha(t-s)^{2\alpha}
|\eta_n|^{4\alpha}
e^{-(s-t_n)|\eta_n|^2}\\
&\leq
C_\alpha(t-s)^{2\alpha}(s-t_n)^{-2\alpha}.\notag
\end{align}
where in the last inequality we used
\[
|\eta_n|^{4\alpha}e^{-(s-t_n)|\eta_n|^2}
=
(s-t_n)^{-2\alpha}
\big[(s-t_n)|\eta_n|^2\big]^{2\alpha}
e^{-(s-t_n)|\eta_n|^2}
\leq
C_\alpha(s-t_n)^{-2\alpha},
\]
since $\sup_{y\geq0}y^{2\alpha}e^{-y}<\infty$.

 For every
$a>0$, the Cauchy--Schwarz inequality gives 
\begin{align*} 
\sup_{\zeta\in\mathbb R^d}
\int_{\mathbb R^d}
\frac{e^{-a|\eta|^2}}{1+|\eta-\zeta|^2}d\eta
&\leq
\left(
\int_{\mathbb R^d}e^{-2a|\eta|^2}\,d\eta
\right)^{1/2}
\sup_{\zeta\in\mathbb R^d}\left(
\int_{\mathbb R^d}
\frac{d\eta}{(1+|\eta-\zeta|^2)^2}
\right)^{1/2}\\
&=
\left(
\int_{\mathbb R^d}e^{-2a|\eta|^2}\,d\eta
\right)^{1/2}
\left(
\int_{\mathbb R^d}
\frac{d\eta}{(1+|\eta|^2)^2}
\right)^{1/2}\\
&\leq C_da^{-d/4},
\end{align*}
where
$\int_{\mathbb R^d}\frac{d\eta}{(1+|\eta|^2)^2}<\infty$
because $d<4$. This gives
\begin{align}\label{e:functional-spectral-bound}
\int_{\R^{d(n-1)}}\prod_{j=1}^{n-1}
e^{-(t_{j+1}-t_j)|\eta_j|^2}
\frac{1}{1+|\eta_j-\eta_{j-1}|^2}
d\eta_{n-1}\cdots d\eta_1\leq C_d^n
\prod_{j=1}^{n-1}
(t_{j+1}-t_j)^{-d/4}.
\end{align}
 Moreover, Plancherel's identity yields
\begin{align}\label{eq:Fourier of B}
\int_{\mathbb R^d}
|\mathcal F\mathbf1_{B_R}(\eta)|^2\,d\eta
=
C_d|B_R|.
\end{align}
Consequently, by \eqref{eq:normX of g1}-\eqref{eq:Fourier of B} and Lemma~\ref{lem:ineq-gamma}, we have
\begin{align*}
\|g_{n,R}^{(1)}(\cdot;t,s)\|_{\mathcal X^{\otimes n}}^2
&\leq
C_{\alpha,d}^n|B_R|(t-s)^{2\alpha}
\int_{\{0<t_1<\cdots<t_n<s\}}
(s-t_n)^{-2\alpha}
\prod_{j=1}^{n-1}
(t_{j+1}-t_j)^{-d/4}
\,d\mathbf t\\
&=
C_{\alpha,d}^n|B_R|(t-s)^{2\alpha}
s^{1-2\alpha+(n-1)(1-d/4)}
\frac{
\Gamma(1-2\alpha)\Gamma(1-d/4)^{n-1}
}{
\Gamma\big(2-2\alpha+(n-1)(1-d/4)\big)
}.
\end{align*}

Note that
\begin{align*}
n!
\|\widetilde g_{n,R}^{(1)}(\cdot;t,s)\|_{\mathcal X^{\otimes n}}^2
=
\|g_{n,R}^{(1)}(\cdot;t,s)\|_{\mathcal X^{\otimes n}}^2.
\end{align*}
Therefore, by Proposition~\ref{lem:full-schur},
\begin{align*}
n!
\|\widetilde g_{n,R}^{(1)}(\cdot;t,s)\|_{\mathcal H^{\otimes n}}^2\leq n!C_T^n
\|\widetilde g_{n,R}^{(1)}(\cdot;t,s)\|_{\mathcal X^{\otimes n}}^2
=
C_T^n
\|g_{n,R}^{(1)}(\cdot;t,s)\|_{\mathcal X^{\otimes n}}^2.
\end{align*}
Since $|B_R|=\omega_dR^d$ and $s\leq T$, it follows that
\begin{align}
\sqrt{n!}
\|\widetilde g_{n,R}^{(1)}(\cdot;t,s)\|_{\mathcal H^{\otimes n}}
\leq
\frac{
C_{\alpha,T,d}^nR^{d/2}(t-s)^\alpha
}{
\sqrt{
\Gamma\big(2-2\alpha+(n-1)(1-d/4)\big)
}
}.
\label{e:functional-first-kernel}
\end{align}

We next estimate $g_{n,R}^{(2)}$.
\begin{align*}
&\|g_{n,R}^{(2)}(\cdot;t,s)\|_{\mathcal X^{\otimes n}}^2=
c_d^n
\int_{\{0<t_1<\cdots<t_n<t,\ s<t_n\}}
\int_{\mathbb R^{dn}}
|\mathcal F\mathbf1_{B_R}(\eta_n)|^2
e^{-(t-t_n)|\eta_n|^2}\\
&\hspace{8em}\times
\prod_{j=1}^{n-1}
e^{-(t_{j+1}-t_j)|\eta_j|^2}
\prod_{j=1}^n
\frac{1}{1+|\eta_j-\eta_{j-1}|^2}
d\boldsymbol\eta d\mathbf t\\&\leq c_d^n
\int_{\{0<t_1<\cdots<t_n<t,\ s<t_n\}}
\int_{\mathbb R^{d}}
|\mathcal F\mathbf1_{B_R}(\eta_n)|^2d\eta_n
\int_{\R^{d(n-1)}}\prod_{j=1}^{n-1}
\frac{e^{-(t_{j+1}-t_j)|\eta_j|^2}}{1+|\eta_j-\eta_{j-1}|^2}
d\eta_{n-1}\cdots d\eta_1 d\mathbf t.
\end{align*}
 Applying \eqref{e:functional-spectral-bound} and \eqref{eq:Fourier of B} yields that 
\begin{align*}
\|g_{n,R}^{(2)}(\cdot;t,s)\|_{\mathcal X^{\otimes n}}^2
&\leq
C_d^n|B_R|
\int_{\{0<t_1<\cdots<t_n<t,\ s<t_n\}}
\prod_{j=1}^{n-1}
(t_{j+1}-t_j)^{-d/4}
d\mathbf t\\
&=
C_d^n|B_R|
\frac{\Gamma(1-d/4)^{n-1}}
{\Gamma\big(1+(n-1)(1-d/4)\big)}
\int_s^t
t_n^{(n-1)(1-d/4)}\,dt_n\\
&\leq
C_d^n|B_R|(t-s)
T^{(n-1)(1-d/4)}
\frac{\Gamma(1-d/4)^{n-1}}
{\Gamma\big(1+(n-1)(1-d/4)\big)}.
\end{align*}
 Therefore,
\begin{align*}
n!
\|\widetilde g_{n,R}^{(2)}(\cdot;t,s)\|^2_{\mathcal H^{\otimes n}}\leq n!C_T^n
\|\widetilde g_{n,R}^{(2)}(\cdot;t,s)\|_{\mathcal X^{\otimes n}}^2
=C_T^n
\|g_{n,R}^{(2)}(\cdot;t,s)\|_{\mathcal X^{\otimes n}}^2.
\end{align*}
Consequently,
\begin{align}
\sqrt{n!}
\|\widetilde g_{n,R}^{(2)}(\cdot;t,s)\|_{\mathcal H^{\otimes n}}
\leq
\frac{
C_{T,d}^nR^{d/2}(t-s)^{1/2}
}{
\sqrt{
\Gamma\big(1+(n-1)(1-d/4)\big)
}
}.
\label{e:functional-second-kernel}
\end{align}
Since $d<4$, Stirling's formula yields that 
\begin{align*}
\sum_{n\ge1}
\frac{C^n}
{\sqrt{\Gamma(c+(n-1)(1-d/4))}}
<\infty.
\end{align*}

For each $n\geq1$, hypercontractivity, the isometry of multiple
Wiener integrals, and the triangle inequality yield
\begin{align*}
\left\|
I_n\left(
g_{n,R}^{(1)}(\cdot;t,s)
+
g_{n,R}^{(2)}(\cdot;t,s)
\right)
\right\|_p
\leq
(p-1)^{n/2}\sqrt{n!}
\left(
\|\widetilde g_{n,R}^{(1)}(\cdot;t,s)\|_{\mathcal H^{\otimes n}}
+
\|\widetilde g_{n,R}^{(2)}(\cdot;t,s)\|_{\mathcal H^{\otimes n}}
\right).
\end{align*}
Applying this estimate first to finite chaos sums and then using
\eqref{e:functional-first-kernel},
\eqref{e:functional-second-kernel}, and the preceding summability to
pass to the limit, we obtain
\begin{align*}
\|F_R(t)-F_R(s)\|_p
\leq
C_{p,T,\alpha,d}R^{d/2}
\left(
(t-s)^\alpha+(t-s)^{1/2}
\right)\leq
C_{p,T,\alpha,d}R^{d/2}|t-s|^\alpha.
\end{align*}

When $s=0$, the kernel $g_{n,R}^{(1)}$ vanishes, while the preceding
estimate for $g_{n,R}^{(2)}$ remains valid. Hence, $Q_R(0)=0$ and 
\begin{align*}
\sup_{R>0}
\mathbb E\left[
|Q_R(t)-Q_R(s)|^p
\right]
&\leq
C_{p,T,\alpha,d}|t-s|^{\alpha p}.
\end{align*}
Since $\alpha p>1$, Kolmogorov's criterion yields continuous
modifications and the tightness of $\{Q_R:R>0\}$ in $C[0,T]$.

\medskip
\noindent
{\bf Step 3. Conclusion.}
By Steps 1 and 2, every subsequential limit in $C[0,T]$
has the finite-dimensional distributions of $Q$. Consequently,
$Q$ admits a continuous modification and
\begin{align*}
Q_R\overset{d}{\longrightarrow}Q
\qquad\text{in }C[0,T].
\end{align*}
Since $T>0$ is arbitrary,
\begin{align*}
R^{-d/2}F_R\overset{d}{\longrightarrow}Q
\qquad\text{in }C[0,\infty),
\end{align*}
where $C[0,\infty)$ is equipped with the topology of uniform
convergence on compact intervals.
\end{proof}

\begin{lemma}\label{lem:ineq-gamma}
	Suppose $\alpha_i<1$ for  $i=1, \dots, m$ and let $\alpha=\sum_{i=1}^m\alpha_i$.
	Then 
	\[\int_{\{0< r_1<\dots<r_m<r_{m+1}=t\}}~ \prod_{i=1}^{m}( r_{i+1}-r_{i})^{-\alpha_i}d \mathbf r = \frac{\prod_{i=1}^m\Gamma(1-\alpha_i)}{\Gamma(m-\alpha+1)} t^{m-\alpha},\]
	where $\Gamma(x)=\int_0^\infty t^{x-1} e^{-t}d t$ is the Gamma function.
\end{lemma}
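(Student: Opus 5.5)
The plan is to prove the formula by induction on $m$, integrating out one variable at a time and using the Beta integral
\[
\int_0^{u}(u-r)^{-a}r^{b-1}\,dr=\frac{\Gamma(1-a)\Gamma(b)}{\Gamma(1-a+b)}\,u^{b-a},\qquad a<1,\ b>0,
\]
which follows from the substitution $r=u\tau$ and the identity $B(x,y)=\Gamma(x)\Gamma(y)/\Gamma(x+y)$.

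\emph{Base case $m=1$.} The integral is $\int_0^t(t-r_1)^{-\alpha_1}dr_1=t^{1-\alpha_1}/(1-\alpha_1)$. This equals $\Gamma(1-\alpha_1)t^{1-\alpha_1}/\Gamma(2-\alpha_1)$, since $\Gamma(2-\alpha_1)=(1-\alpha_1)\Gamma(1-\alpha_1)$.

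\emph{Inductive step.} Assume the formula holds for $m-1$ exponents. Fix $r_m\in(0,t)$ and integrate over $0<r_1<\dots<r_{m-1}<r_m$. The factors $\prod_{i=1}^{m-1}(r_{i+1}-r_i)^{-\alpha_i}$ involve only these variables, with $r_m$ playing the role of the terminal time. The induction hypothesis therefore gives
\[
\frac{\prod_{i=1}^{m-1}\Gamma(1-\alpha_i)}{\Gamma(m-\alpha')}\,r_m^{\,m-1-\alpha'},\qquad \alpha'=\sum_{i=1}^{m-1}\alpha_i .
\]
It remains to integrate this against $(t-r_m)^{-\alpha_m}$ over $r_m\in(0,t)$. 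Apply the Beta integral with $a=\alpha_m$ and $b=m-\alpha'$. The result is
\[
\frac{\Gamma(1-\alpha_m)\,\Gamma(m-\alpha')}{\Gamma(m+1-\alpha)}\,t^{m-\alpha}.
\]
The factor $\Gamma(m-\alpha')$ cancels the denominator from the induction hypothesis, which yields the claimed formula. All integrands are nonnegative, so Tonelli justifies the iterated integration.

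\emph{Main obstacle.} The one point needing care is that the Beta integral requires $b=m-\alpha'>0$. This holds because each $\alpha_i<1$ gives $\alpha'<m-1$, so $m-\alpha'>1>0$; likewise $1-\alpha_m>0$. The same reasoning shows that every Gamma argument appearing along the way is positive, so all quantities are finite. In the application above, $\alpha_i\in\{d/4,2\alpha\}$, and both are less than $1$ since $d<4$ and $\alpha<1/2$.
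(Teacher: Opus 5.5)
Your proof is correct. The base case, the use of the induction hypothesis with $r_m$ as terminal time, the Beta integral with $a=\alpha_m$ and $b=m-\alpha'$, and the cancellation of $\Gamma(m-\alpha')$ all check out. Your positivity check $m-\alpha'>1$ is exactly what is needed, and it still holds if some $\alpha_i$ are negative.

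The paper states this lemma without proof, treating it as a standard fact, so there is no argument of its own to compare with. Your induction via the Beta function, with Tonelli justifying the iterated integration, is the standard argument that supplies it. It also covers the paper's second use of the lemma: $m=n-1$ exponents $d/4$ with terminal time $t_n$, in the bound for $g^{(2)}_{n,R}$.
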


{\bf Acknowledge}

Wangjun Yuan is supported by a grant of the Department of Science and Technology of Guangdong Province (No.\ 2024QN11X161), the National Natural Science Foundation of China (No.\ 12501183), and the Guangdong Basic and Applied Basic Research Foundation (No.\ 2026A1515030040).

\bibliographystyle{plain}
\bibliography{Reference-clt}
\end{document}